\newcommand{\bC}{{\mathbb C}}
\newcommand{\bQ}{{\mathbb Q}}
\newcommand{\bP}{{\mathbb P}}
\newcommand{\bZ}{{\mathbb Z}}
\newcommand\Spec{\text{\rm Spec}}
\newcommand\Proj{\text{\rm Proj}}
\newtheorem{thm}{Theorem}[section]
\newtheorem{cor}[thm]{Corollary}
\newtheorem{prop}[thm]{Proposition}
\theoremstyle{definition}
\newtheorem{defn}[thm]{Definition}
\newtheorem{conj}[thm]{Conjecture}
\newtheorem{rem}[thm]{Remark}
\theoremstyle{remark}
\newtheorem{Prbm}{\bf Problem}
\newcommand\cE{{\mathcal{E}}}
\newcommand\cF{{\mathcal{F}}}
\newcommand\cU{{\mathcal{U}}}
\newcommand\cV{{\mathcal{V}}}
\newcommand\cX{{\mathcal{X}}}
\newcommand\cY{{\mathcal{Y}}}
\title{Factoring threefold  divisorial contractions to points}
\author{Jungkai Alfred Chen}
\address{\rm Department of Mathematics, National Taiwan University, Taipei,
106, Taiwan} \email{jkchen@math.ntu.edu.tw}
\address{\rm Research Institute for Mathematical Sciences, Kyoto University, Kyoto 606-8502, Japan}
\thanks{Mathematics Subject Classification (2010): 14E30
(primary) 14J30, 14E05 (secondary).\\
The  author was partially supported by the NCTS/TPE and National
Science Council of Taiwan. We are indebted to Hacon, Hayakawa,
Kawakita, Kawamata, Koll\'ar and Mori for many useful discussion,
comments and corrections. This work was done during a visit of the
author to the RIMS, Kyoto University. The author would like to thank
the University of Kyoto for its hospitality.}
\begin{document}
\begin{abstract}
We show that terminal $3$-fold divisorial contraction to a point of
index $>1$ with non-minimal discrepancy may be factored into a
sequence of flips, flops and divisorial contractions to a point with
minimal discrepancies.
\end{abstract}

\maketitle
%
\section{introduction}

In minimal model program, the elementary birational maps consists of
flips, flops and divisorial contractions. In dimension three, after
the milestone work of Mori (cf. \cite{Mo82}), these maps are
reasonably well understood while there are many recent progresses in
describing these birational maps explicitly. The geometry of flips
and flops in dimension three can be found in the seminal papers of
 Koll\'ar  and Mori (cf. \cite{Ko89, KM92, Mo88}).
Divisorial contractions to a curve was studies by Cutkosky and
intensively by Tziolas (cf. \cite{Cu, Tz03,Tz05,Tz09}). Divisorial
contractions to points are most well-understood. By results of
Hayakawa, Kawakita, and Kawamata(cf. \cite{HaI,HaII,Kk01,Kk05,Kk11,
Ka}),  it is now known that divisorial contractions to higher index
points in dimension three are weighted blowups (under suitable
embedding) and completely classified. It is expected that all
divisorial contractions to points can be realized as weighted
blowups.

Let $f:Y \to X$ be a divisorial contraction to a point $P \in X$ of
index $n>1$ in dimension three. We say that $f$ has minimal
discrepancy if the discrepancy of $f$ is the minimal possible $1/n$
(cf. w-morphism in \cite{CH}). Divisorial contractions to higher
index points with minimal discrepancies play a very interesting role
for the following two reasons.
\begin{enumerate}
\item For any terminal singularities $P \in X$ of index $n >1$, there exists a
partial resolution $X_n \to \ldots X_0:=X$ such that each $X_n$ has
only terminal Gorenstein singularities and each $X_{i+i} \to X_i$ is
a divisorial contraction to a point with minimal discrepancy (cf.
\cite{HaII}).

\item For any flipping contraction or divisorial contraction to a
curve, by taking a divisorial extraction over the highest index
point with minimal discrepancy, one gets a factorization into
"simpler" birational maps (cf. \cite{CH}).
\end{enumerate}

On the other hand, divisorial contractions to  points with
non-minimal discrepancies  are rather special. For example, if $P
\in X$ is of type cAx/2, cAx/4 or cD/3, then there is no divisorial
contraction with non-minimal discrepancy. The purpose of this note
is to show that divisorial contractions to a higher index point with
non-minimal discrepancies can be factored into divisorial
contractions of minimal discrepancies, flips and flops (cf.
\cite{CH}).


In fact, let $f: Y \to  X$ be a divisorial contraction to a point $P
\in X$ of index $n>1$. Suppose that the discrepancy of $f$ is $a/n >
1/n$. If $Y$ has only Gorenstein singularities, then by the
classification of \cite{Mo82, Cu}, one has that $X$ is Gorenstein
unless $f: Y \to X$ is a divisorial contraction to a quotient
singularity $P \in X$ of type $\frac{1}{2}(1,1,1)$ with discrepancy
$\frac{1}{2}$. Therefore, we may and do assume that $Y$ has some
non-Gorenstein point $Q \in Y$ of index $p$. We thus consider a
divisorial contraction over $Q$ with minimal discrepancy.

\begin{thm}
Let $f: Y \to X$ be an extremal contraction to a point $P \in X$ of
index $n>1$ with exceptional divisor $E$. Let $Q \in Y$ be a point
of highest index $p$ in $E\subset Y$ and  $g: Z \to Y$ be an
extremal extraction with discrepancy $\frac{1}{p}$. Then the
relative canonical divisor $-K_{Z/X}$ is nef.
\end{thm}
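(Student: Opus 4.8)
The plan is to pass to the composite $h=f\circ g\colon Z\to X$. Write $a/n$ (with $a$ a positive integer) for the discrepancy of $f$, $1/p$ for that of $g$, and let $G\subset Z$ be the exceptional divisor of $g$. Then
\[
K_Z=g^*f^*K_X+\tfrac an\,g^*E+\tfrac1p\,G,\qquad\text{hence}\qquad -K_{Z/X}=-\tfrac an\,g^*E-\tfrac1p\,G .
\]
Since $-K_{Z/X}$ is $h$-exceptional, it is $h$-nef exactly when it is non-negative on every irreducible curve contracted by $h$, so the first step is to cut down the test curves. As $f$ and $g$ are extremal and $X,Y,Z$ are $\bQ$-factorial, $\rho(Z/X)=\rho(Z/Y)+\rho(Y/X)=2$, so $\overline{NE}(Z/X)$ is a two-dimensional cone and it suffices to test $-K_{Z/X}$ on its two extremal rays.

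One ray $R_1$ is spanned by a curve $\ell\subset G$ contracted by $g$. For such $\ell$ one has $g^*E\cdot\ell=E\cdot g_*\ell=0$ and $G\cdot\ell=(G|_G)\cdot\ell<0$ because $-G|_G$ is $g$-ample, so $(-K_{Z/X})\cdot\ell=-\tfrac1p\,G\cdot\ell>0$ and $R_1$ is harmless. The content of the theorem is the second ray $R_2$. Since $g_*$ maps $N_1(Z/X)$ onto $N_1(Y/X)$ with kernel $R_1$, the ray $R_2$ is spanned by a curve $C$ with $g_*C=d\,C'$ for some $d\ge1$, where $C'$ is the generator of $\overline{NE}(Y/X)$; necessarily $C'\subset E$, and I may assume $C'$ passes through $Q$ (otherwise $G\cdot C=0$ and we are done). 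Because $h^*K_X\cdot C=0$ we get $(-K_{Z/X})\cdot C=-K_Z\cdot C=-d\,K_Y\cdot C'-\tfrac1p\,G\cdot C$, and $f$ contracts a $K_Y$-negative ray, so $-K_Y\cdot C'>0$. Thus the theorem reduces to the single numerical estimate
\[
G\cdot C\ \le\ p\,d\,(-K_Y\cdot C'),
\]
that is, to bounding how much the strict transform of the $f$-extremal curve can meet the divisor $G$ produced by the $w$-morphism.

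To establish this I would invoke the explicit classification. By Hayakawa, Kawakita and Kawamata the contraction $f$ is, after a suitable embedding, a weighted blow-up of the terminal point $P\in X$, so the local equations of $Y$ and $E$, the degree $-K_Y\cdot C'$, and the type and position of the highest-index point $Q\in E$ are all known; likewise $g$ is the weighted blow-up at $Q$ realizing the minimal discrepancy $1/p$. One then computes $G\cdot C$ from the weights of $g$ and the leading terms of a local parametrization of $C'$ at $Q$, and verifies the displayed inequality type by type (the points $cA/n$, $cAx/n$, $cD/n$, $cE/2$, $\tfrac12(1,1,1)$, \dots), exploiting that $C'$ is the \emph{contractible} extremal curve and hence is strongly constrained near $Q$, e.g.\ by a bound on its length in Mori's sense. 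The first two paragraphs are routine; the essential difficulty is exactly this last step — identifying $C'$ and $C$ and bounding $G\cdot C$ uniformly over the classification. I expect the tight cases to be those where $Q$ carries the largest index $p$ allowed by $n$ and where $C'$ is as singular as possible at $Q$, so that $G\cdot C$ and $p\,d\,(-K_Y\cdot C')$ nearly coincide and a crude multiplicity estimate does not suffice; there one must use the precise form of the weighted blow-up $f$.
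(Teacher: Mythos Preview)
Your reduction is correct and matches the paper's architecture: with $\rho(Z/X)=2$, one extremal ray is the $g$-contracted class $l_F$, on which $-K_{Z/X}>0$ trivially, and the content lies in the other ray; the paper then also finishes by a case-by-case run through the Hayakawa--Kawakita classification of $f$ together with the known $w$-extraction $g$. Where the paper differs is in the computational device at the crucial step. Instead of trying to locate the second extremal curve $C$ and compute $G\cdot C$ from a local parametrization of $C'$ at $Q$, the paper fixes an auxiliary divisor $D_0\ne E$ on $Y$ through $Q$ (a coordinate hyperplane, or a general elephant $\Theta\in|-K_Y|$) with $l_0:=D_0\cdot E$ irreducible, writes
\[
f^*D_{0,X}=D_0+\tfrac{c_0}{n}E,\qquad g^*D_0=D_{0,Z}+\tfrac{q_0}{p}F,\qquad g^*E=E_Z+\tfrac{\mathfrak q}{p}F,
\]
and proves that $-K_{Z/X}$ is nef as soon as
\[
T(f,g,D_0):=-\tfrac{ac_0}{n^2}E^3+\tfrac{\mathfrak q\,q_0}{p^3}F^3\le 0\qquad\text{and}\qquad c_0-aq_0\le 0.
\]
The trick is that every curve $l\subset E$ is numerically proportional to $l_0$ on $Y$, and comparing $l\cdot_E l_0$ on $E$ with its transform on $E_Z$ bounds $l_Z\cdot K_Z$ for \emph{all} strict transforms simultaneously, so one never has to identify which $l_Z$ is extremal. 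The numbers $E^3$ and $F^3$ come straight from Kawakita's tables, reducing each case in Section~3 to a one-line arithmetic check of $T(f,g,D_0)$. Your proposed route via local parametrizations of $C'$ at $Q$ would work in principle, but it forces you to pin down the extremal curve in every case and does not exploit these tabulated global intersection numbers; the paper's criterion is precisely what makes the case analysis short and uniform.
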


Notice that the relative Picard number $\rho(Z/X)=2$. Therefore, we
are able to play the so called 2-ray game. As a consequence, there
is a flip or flop $Z \dashrightarrow Z^+$. By running the minimal
model program of $Z^+/X$, we have $Z \dashrightarrow Z^\sharp
\stackrel{g^\sharp}{\to} Y^\sharp \stackrel{f^\sharp}{\to} X$, where
$Z \dashrightarrow Z^\sharp$ consists of a sequence of flips and
flops, $Z^\sharp \to Y^\sharp$ is a divisorial contraction. Let
$F_{Y^\sharp}$ (resp., $F_{Z^\sharp}, E_{Z^\sharp}$) be the proper
transform of $F$ (resp. $F,E$) in $Y^\sharp$ (resp. $Z^\sharp$).

In fact, we have the following
more precise description.

\begin{thm}
Keep the notation as above. We have that $f^\sharp$ is a divisorial
contraction to $P \in X$ with discrepancy
$\frac{a'}{n}<\frac{a}{n}$. Moreover, $g^\sharp$ is a divisorial
contraction to a singular point $Q' \in F_{Y^\sharp} $ of index $p'$
with discrepancy $\frac{q'}{p'}$. We may rite ${g^\sharp}^*
F_{Y^\sharp}=  F_{Z^\sharp}+ \frac{\frak q}{p'}E_{Z^\sharp}$, then
$$\frac{a}{n}=\frac{a'}{n} \cdot \frac{q'}{p'} +\frac{\frak q}{p'}.$$
More specifically,  exactly one of the following holds.
\begin{enumerate}
\item If $P \in X$ is of type other than cE/2, then $Q'$ is a point of index $n$, and $g^\sharp$ has discrepancy $\frac{a''}{n}$ with
$a'+a''=a$.

\item If $P \in X$ is of type  cE/2, then $Q'$ is a point of index $p'=3$, and $g^\sharp$ has
minimal discrepancy $\frac{1}{3}$.
\end{enumerate}
\end{thm}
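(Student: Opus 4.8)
The plan is to keep the diagram $Z\dashrightarrow Z^\sharp\stackrel{g^\sharp}{\to}Y^\sharp\stackrel{f^\sharp}{\to}X$ produced by the $2$-ray game and proceed in three stages: first identify which divisors $f^\sharp$ and $g^\sharp$ contract; then read off the numerical relations by comparing the two towers $Z\to Y\to X$ and $Z^\sharp\to Y^\sharp\to X$; and finally determine the index $p'$ and the discrepancy of $g^\sharp$ from the explicit local geometry in each case of the classification of higher-index terminal $3$-fold points.

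\textbf{Step 1 (the maps).} Since $Z$ is obtained from $Y$ by an extremal extraction and $f$ contracts only $E$, the divisors of $Z$ lying over $X$ are exactly $E_Z$ and $F$; as flips and flops are isomorphisms in codimension one, $Z^\sharp$ has exactly the divisors $E_{Z^\sharp}$ and $F_{Z^\sharp}$ over $X$. The $2$-ray game over $X$ has two ends, one for each extremal ray of $Z/X$: the near end is the two-step contraction ``$g$ (contracting $F$), then $f$ (contracting $E$)'', so the far end is ``$g^\sharp$ (contracting $E_{Z^\sharp}$), then $f^\sharp$ (contracting $F_{Z^\sharp}$, i.e.\ $F_{Y^\sharp}$)''. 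In particular $f^\sharp$ extracts the valuation $F$, whose center on $X$ is the point $P$, so $f^\sharp$ contracts $F_{Y^\sharp}$ to $P$. From $K_{Y^\sharp}=(f^\sharp)^*K_X+\tfrac{a'}{n}F_{Y^\sharp}$ with $a'>0$ one gets $F_{Y^\sharp}\cdot C<0$ for curves $C$ contracted by $f^\sharp$ (negativity lemma), hence $-K_{Y^\sharp}$ is $f^\sharp$-ample and $f^\sharp$ is a genuine divisorial contraction to $P$; the same computation for $g^\sharp$ --- using that $Y^\sharp$ is terminal, so that the discrepancy $\tfrac{q'}{p'}$ of $g^\sharp$ is positive --- shows $g^\sharp$ is a genuine divisorial contraction, and ${g^\sharp}^*F_{Y^\sharp}=F_{Z^\sharp}+\tfrac{\mathfrak q}{p'}E_{Z^\sharp}$ is the standard form of such a pullback. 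That the center $Q'$ of $E$ on $Y^\sharp$ is a \emph{point}, lying on $F_{Y^\sharp}$ and of index $p'$, is part of what Step~3 establishes.

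\textbf{Step 2 (discrepancies).} Write $g^*E=E_Z+\lambda F$. Substituting into $K_Z=g^*(f^*K_X+\tfrac an E)+\tfrac1p F$ gives $K_Z=(f\circ g)^*K_X+\tfrac an E_Z+\bigl(\tfrac{a\lambda}{n}+\tfrac1p\bigr)F$, and since $f^\sharp$ extracts the valuation $F$ this reads off $\tfrac{a'}{n}=\tfrac{a\lambda}{n}+\tfrac1p$. Dually, substituting ${g^\sharp}^*F_{Y^\sharp}=F_{Z^\sharp}+\tfrac{\mathfrak q}{p'}E_{Z^\sharp}$ into $K_{Z^\sharp}=(g^\sharp)^*\bigl((f^\sharp)^*K_X+\tfrac{a'}{n}F_{Y^\sharp}\bigr)+\tfrac{q'}{p'}E_{Z^\sharp}$ writes $K_{Z^\sharp}-(f^\sharp\circ g^\sharp)^*K_X$ as a combination of $F_{Z^\sharp}$ and $E_{Z^\sharp}$; because these define the same divisorial valuations over $X$ as $F$ and $E$ (codimension-one invariance under flips and flops), the coefficient of the valuation $E$ must equal $\tfrac an$, which is the displayed identity relating $a,n,a',q',p',\mathfrak q$. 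The strict inequality $\tfrac{a'}{n}<\tfrac an$ is then immediate in case~(1) from $a'+a''=a$ with $a''\ge1$, and in case~(2) it is checked directly against the $cE/2$ model.

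\textbf{Step 3 (the dichotomy, and the main obstacle).} This is where the substantial work lies. One runs through the classification of terminal $3$-fold points of index $n>1$ and of the divisorial contractions to them due to Kawakita, Hayakawa and Kawamata \cite{HaI,HaII,Kk01,Kk05,Kk11,Ka}: for each type of $P$ one describes $Y$ and $E$, locates the highest-index point $Q\in E$, writes the w-morphism $g$ over $Q$ as a weighted blow-up in suitable coordinates, and plays the $2$-ray game on $Z$ explicitly --- computing the singularities of $Z$, identifying the flopping and flipping curves, performing the flops and flips, and reading off $g^\sharp$ together with $Q'$, its index $p'$, and the multiplicities $q'$ and $\mathfrak q$. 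Two patterns emerge. If $P$ is not of type $cE/2$, the game returns to a point $Q'$ of index $n$ with $F_{Y^\sharp}$ Cartier at $Q'$ (so $\mathfrak q=n$), and combined with Step~2 this forces $a'+a''=a$, where $\tfrac{a''}{n}$ is the discrepancy of $g^\sharp$. If $P$ is of type $cE/2$, the minimal extraction over the highest-index point of $E$ introduces a point of index $3$, the game lands on it, and $g^\sharp$ is forced to be the w-morphism there, of discrepancy $\tfrac13$. The main obstacle is precisely this case-by-case bookkeeping: knowing which weighted blow-ups realize $f$, controlling the singularities of $Z$ and the w-morphism $g$ through the Newton-polytope and toric combinatorics, and tracing the flopping and flipping curves through to the far end --- with $cE/2$ the genuine exception, where the clean accounting $p'=n$, $a'+a''=a$ breaks down and one must additionally show that index $3$ occurs only in this case. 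The remaining points --- that the $2$-ray game stays pure (flips and flops, then the single divisorial contraction $g^\sharp$) and that $f^\sharp$, $g^\sharp$ contract to points rather than curves --- follow from the nefness of $-K_{Z/X}$, termination of $3$-fold flips and flops, and the same local computations.
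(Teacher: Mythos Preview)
Your Steps 1 and 2 are correct and match the paper's general setup (Proposition~2.5 and the discrepancy bookkeeping). The substantive divergence is in Step~3. You propose to ``play the $2$-ray game on $Z$ explicitly --- computing the singularities of $Z$, identifying the flopping and flipping curves, performing the flops and flips''. The paper does \emph{not} do this. Instead it proves a structural result (Theorem~\ref{compatible}): once $f$ and $g$ are both realized as weighted blowups with primitive vectors $v_1,v_2$ in a common toric ambient tower $\cX_2\to\cX_1\to\cX_0$, and once $-K_{Z/X}$ is nef (Hypothesis~$\flat$), the far end $Y^\sharp,Z^\sharp$ of the $2$-ray game is \emph{canonically isomorphic} to the proper transforms $Y',Z'$ in the tower $\cX'_2\to\cX'_1\to\cX_0$ obtained by blowing up $v_2$ first and then $v_1$. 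The isomorphism comes for free from $\Proj$ of the same anti-ample graded ring, so no flip or flop ever has to be computed. The case-by-case work in Section~3 then reduces to: (i) checking that $g$ over the highest-index point is a compatible weighted blowup with some vector $v_2$; (ii) verifying the numerical inequality $T(f,g)\le 0$ that makes $-K_{Z/X}$ nef; (iii) reading off $f',g'$ directly from the swapped weights $w'_1,w'_2$. Your approach would in principle work, but tracing explicit $3$-fold flips and flops through each family is substantially harder and is exactly what the toric ``interchange of vectors'' trick is designed to avoid. One small slip: in the non-$cE/2$ case you deduce $\mathfrak q=n$ from ``$F_{Y^\sharp}$ Cartier at $Q'$''; this is in fact true in every case the paper treats (the relevant coordinate carries weight $p'/p'$ at $Q'$), but it is something one reads off from the explicit $w'_1,w'_2$, not something available a priori before the case analysis.
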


As an immediate corollary by induction on discrepancy $a$, we have:

\begin{cor}
For any divisorial contraction $Y \to X$ to a point $P \in X$ of index $n>1$ with discrepancy $\frac{a}{n} >\frac{1}{n}$. There exsits a sequence of birational maps $$Y=:X_n \dashrightarrow \ldots \dashrightarrow X_0=:X$$ such that each map $X_{i+1} \dashrightarrow X_{i}$ is one of the following:\\

\begin{enumerate}
\item a divisorial extraction over a point of index $r_i >1$ with minimal discrepancy
$\frac{1}{r_i}$;

\item a divisorial contraction to a point of index $r_i >1$ with minimal discrepancy
$\frac{1}{r_i}$;

\item a flip or flop.

\end{enumerate}

\end{cor}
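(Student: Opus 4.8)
The plan is a straightforward induction on the numerator $a\ge 1$ of the discrepancy $\frac{a}{n}$, with Theorems 1.1 and 1.2 doing essentially all of the work; for bookkeeping we include the minimal case $a=1$, in which $f\colon Y\to X$ itself is the required factorization, being a map of type (2). So suppose $a\ge 2$, i.e. $\frac{a}{n}>\frac1n$, and suppose the statement is known for every divisorial contraction to a point of index $>1$ whose discrepancy has numerator strictly smaller than $a$.

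First I would note that, since $\frac{a}{n}>\frac1n$, the classification of \cite{Mo82, Cu} recalled before Theorem 1.1 forces $Y$ to be non-Gorenstein (the only divisorial contraction from a Gorenstein threefold to a non-Gorenstein point is the blow-up of $\frac12(1,1,1)$, which has discrepancy $\frac12$). Hence we are in the situation of Theorem 1.1: there is a point $Q\in E$ of highest index $p>1$ and a w-morphism $g\colon Z\to Y$ over $Q$, that is, an extremal extraction of minimal discrepancy $\frac1p$ (cf. \cite{HaII, CH}), and $\rho(Z/X)=2$. Regarded as a rational map $Y\dashrightarrow Z$, this $g$ is a divisorial extraction of type (1). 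Playing the 2-ray game, i.e. running the relative minimal model program over $X$, then produces
\[
Z \dashrightarrow Z^\sharp \stackrel{g^\sharp}{\to} Y^\sharp \stackrel{f^\sharp}{\to} X,
\]
in which $Z\dashrightarrow Z^\sharp$ is a composite of flips and flops (maps of type (3)) and $g^\sharp,f^\sharp$ are divisorial contractions to points, exactly as in the set-up of Theorem 1.2.

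Next I would apply Theorem 1.2 and recurse on the two contractions $g^\sharp$ and $f^\sharp$. By that theorem $f^\sharp\colon Y^\sharp\to X$ is a divisorial contraction to $P\in X$ of index $n>1$ with discrepancy $\frac{a'}{n}$ and $a'<a$; if $a'=1$ it is already a single map of type (2), while if $a'\ge 2$ the induction hypothesis factors $Y^\sharp\dashrightarrow X$ into maps of types (1)--(3). Similarly $g^\sharp\colon Z^\sharp\to Y^\sharp$ is a divisorial contraction to a point $Q'\in F_{Y^\sharp}$ of index $p'>1$: in the cE/2 case (alternative (2) of Theorem 1.2) it has minimal discrepancy $\frac13$ and hence is itself a map of type (2); in every other case (alternative (1)) $Q'$ has index $n$ and $g^\sharp$ has discrepancy $\frac{a''}{n}$ with $a'+a''=a$, so $a''<a$ and either $a''=1$ (one map of type (2)) or the induction hypothesis applies to $g^\sharp$. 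Concatenating the pieces then gives
\[
Y \dashrightarrow Z \dashrightarrow Z^\sharp \dashrightarrow Y^\sharp \dashrightarrow X,
\]
whose first arrow is of type (1), followed by the flips and flops, then the arrows of the (recursively factored) $g^\sharp$, and finally those of the (recursively factored) $f^\sharp$; this is a sequence of the asserted form.

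Since the real content is Theorem 1.2, the only delicate point (the closest thing to an obstacle here) is the well-foundedness of the recursion, that is, the strict inequalities $a',a''<a$. These follow from $a'<a$, part of the statement of Theorem 1.2, together with $a'+a''=a$ and the positivity $a',a''\ge 1$ of the numerators of discrepancies of divisorial contractions to an index-$n$ point in alternative (1); in alternative (2) only the recursion on $f^\sharp$ occurs, and $a'<a$ alone suffices there. One would also want to check that every variety produced along the way is again $\mathbb{Q}$-factorial terminal, with the relevant contracted or extracted point of index $>1$, so that Theorems 1.1 and 1.2 remain applicable; but this is precisely what those theorems guarantee, so the recursion never leaves their scope. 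The induction therefore terminates, and the corollary follows.
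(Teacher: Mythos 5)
Your argument is correct and is exactly the paper's intended proof: the paper derives the corollary "by induction on the discrepancy $a$" from Theorems 1.1 and 1.2, concatenating the extraction $Y\dashrightarrow Z$, the flips/flops $Z\dashrightarrow Z^\sharp$, and the recursively factored $g^\sharp$ and $f^\sharp$, with termination guaranteed by $a'<a$ and $a'+a''=a$. Your additional bookkeeping (the non-Gorenstein reduction via \cite{Mo82,Cu} and the positivity of the numerators $a',a''$) matches the remarks the paper makes before Theorem 1.1 and inside Theorem 1.2, so there is nothing to add.
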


We now  briefly explain the idea. According the 2-ray game, we have
the following diagram of birational maps.
$$\begin{CD}
Z @>{\dashrightarrow}>> Z^\sharp \\
@VgVV @VV{g^\sharp}V\\
Y @>>> Y^\sharp \\
@VfVV @VV{f^\sharp}V \\
X@>{=}>> X
\end{CD}
$$
Notice that in this diagram the order of exceptional divisors of the
tower $Z^\sharp \to Y^\sharp \to X$ and $Z \to Y \to X$ are
reversed.  The usual difficulty to understand the diagram explicitly
is that we need to determine the center of $E_{Z^\sharp}$ in
$Y^\sharp$.

On the other hand, since $f: Y \to X$ is a weighted blowup, one can
embed $X$ into a toric variety $\cX_0$ and understand $f:Y \to X$ as
the proper transform of a toric weight blowup $\cX_1 \to \cX_0$,
which is nothing but a subdivision of a cone along a vector $v_1$.
If $Z \to Y$ can be realized as the proper transform of a toric
weighted blowup $\cX_2 \to \cX_1$ over the origin of the standard
coordinate charts, then we can view $\cX_2 \to \cX_1$ as a toric
weighted blowup along a vector $v_2$. Therefore, the tower $\cX_2
\to \cX_1 \to \cX_0$ is obtained by subdivision along vectors $v_1$
and then $v_2$.

We may reverse the ordering of $v_1,v_2$ (under mild combinatorial
condition) by considering a tower $\cX'_2 \to \cX'_1 \to \cX_0$ of
toric weighted blowup by subdivision along $v_2$ and then $v_1$. The
proper transforms of $X$ in this tower then gives $Z' \to Y' \to X$.
Clearly, this is  a tower reversing the order of exceptional
divisors of $Z \to Y \to X$ by construction.  Notice that the proper
transform $Y'$ and $Z'$ is not necessarily terminal, a priori.

In section 2, we recall and generalize the construction of weighted
blowup. We also derive a criterion for $-K_{Z/X}$ being nef.
Moreover, we show that if the tower $Z \to Y \to X$ can be embedded
into a tower of weighted blowup $\cX_2 \to \cX_1 \to \cX_0$ and
$-K_{Z/X}$ is nef, then the output of 2-ray game coincides with the
output by "reversing order of vectors" of the tower of weighted
blowups.

In Section 3, we study divisorial contractions with non-minimal
discrepancies case by case. We see that the divisorial extraction $Z
\to Y$ over a point of index $>1$ usually give a tower $Z \to Y \to
X$ such that $-K_{Z/X}$ is nef and it can be embedded into a tower
of weighted blowups $\cX_2 \to \cX_1 \to \cX_0$ with vectors
$v_1,v_2$. Indeed this is always the case if $Z \to Y$ is a
contraction over a point of highest index.  The theorems then
follows easily.

We always work over complex number field $\bC$ and in dimension
three. We assume that threefold $X,Y$ are $\bQ$-factorial. We freely
use the standard notions in minimal model program such as terminal
singularities, divisorial contractions, flips, and flops. For the
precise definition, we refer to \cite{KM98}.
\section{Preliminaries}

\subsection{weighted blowups}
We recall the construction of weighted blowups by using the toric
language.

Let $N=\bZ^d$ be a free abelian group of rank $d$ with standard
basis $\{e_1,...,e_d\}$. Let $v=\frac{1}{n}(a_1,...,a_d) \in \bQ^d$
be a vector. We may assume that $gcd(n,a_1,...,a_d)=1$. We consider
$\overline{N}:=N+\bZ v$. Clearly, $N \subset \overline{N}$. Let $M$
(resp. $\overline{M}$) be the dual lattice of $N$ (resp.
$\overline{N}$).

Let $\sigma$ be the cone of first quadrant, i.e. the cone generated
by the standard basis $e_1,...,e_d$ and $\Sigma$ be the fan consists
of $\sigma$ and all the subcones of $\sigma$. We have
$$\begin{array}{l}
\cX_{N,\Sigma}:=\Spec \bC[\sigma^\vee \cap M]= \bC^d,\\
\cX_{\overline{N},\Sigma}:=\Spec \bC[\sigma^\vee \cap \overline{M}]=
\bC^d/\frac{1}{n}(a_1,...,a_d),
\end{array}
$$

Let $v_1=\frac{1}{r_1}(b_1,...,b_d)$ be a primitive vector in
$\overline{N}$. We assume that $b_i \in \bZ_{>0}$ and
$gcd(r_1,b_1,...,b_d)=1$. We are interested in the weighted blowup
of $ o \in
\bC^d/\frac{1}{n}(b_1,...,b_d)=\cX_{\overline{N},\Sigma}=:\cX_0$
with weights $v_1=\frac{1}{r_1}(b_1,...,b_d)$ which we describe now.

Let $\overline{\Sigma}$ be the fan obtained by subdivision of
$\Sigma$ along $v_1$. One thus have a toric variety
$\cX_{\overline{N},\overline{\Sigma}}$ together with the natural map
$ \cX_{\overline{N},\overline{\Sigma}} \to
\cX_{\overline{N},\Sigma}$. More concretely, let $\sigma_i$ be the
cone generated by $\{e_1,...,e_{i-1},v_1,e_{i+1},...,e_d\}$, then
$$\cX_1:=\cX_{\overline{N},\overline{\Sigma}}= \cup_{i=1}^d \cU_i,$$
where $\cU_i=\cX_{\overline{N},\sigma_i}=\Spec \bC[ \sigma_i^\vee
\cap \overline{M}]$. We always denote the origin of $\cU_i$ as
$Q_i$.

\subsection{tower of toric weighted blowups}

Let us look at $\cU_i$, which is $\cX_{\overline{N},\sigma_i}$.
Suppose that there is a primitive vector $v_2=\sum \frac{c_i}{r_2}
e_i \in \overline{N}$ such that $v_2$ is in the interior of
$\sigma_i$.


We can write
$$\begin{array}{ll}
v_2&= \frac{1}{n}(c_1e_1+...+c_de_d) \\
&=\frac{1}{p}(q_1 e_1+q_2 e_2+. +q_i v_1+..+q_d e_d),
\end{array}
 $$
for some $q_i \in \bZ_{>0}$.

We denote $w_2=\frac{1}{p}(q_1,...,q_d)$ to be the weight of $v_2$
is the cone $\sigma_i$, or simply the weight of $v_2$ if no
confusion is likely. It is convenient to introduce
$\widehat{w_2}:=\frac{1}{p}(q_1,...,q_{i-1},0,q_{i+1},...,q_d)$.
Then we have
$$ v_2=\frac{q_i}{p}v_1+\widehat{w_2}. \eqno{\sharp}$$

 \noindent {\bf Observation.}\label{toric_sing} Keep the
notation as above. Notice that if
$$\overline{N} \text{ is generated by } \{v_1,v_2,e_1,...,\hat{e_i},...,e_d\}, \eqno{\dagger}$$ then $\cX_{\overline{N},\sigma_i} \cong \bC^d/w_2=
\bC^d/\frac{1}{p}(q_1,q_2,...,q_d)$ has only quotient singularity at
$Q_i$.


We can consider the second weighted blowup with vector $v_2$.
Let
$\overline{\overline{\Sigma}}$ be the fan obtained by subdivision of
$\sigma_i$ along $v_2$. One thus have a toric variety
$\cX_{\overline{N},\overline{\overline{\Sigma}}}$. Similarly, let $\tau_j$ be the cone generated by
$$\left\{ \begin{array}{ll}
\{e_1,...,e_{j-1},v_2,e_{j+1},...,e_{i-1},v_1,e_{i+1},...,e_d\},&\text{ if } j \ne i \\
\{e_1,...,e_{i-1},v_2,e_{i+1},..,e_d\},&\text{ if } j = i \\
\end{array} \right.$$

Then $$\cX_2:=\cX_{\overline{N},\overline{\overline{\Sigma}}}= (\cup_{j=1}^d \cV_j) \bigcup (\cup_{k \ne i} \cU_k),$$
where $\cV_j=\Spec \bC[ \tau_j^\vee \cap \overline{M}]$.
Let $w_2=\frac{1}{p}(q_1,...,q_d)$. Then the weighted blowup $\cup_{j=1}^d \cV_j \to \cU_i$ with vector $v_2$ can be conseidered as a weighted blowup with weights $w_2$.

\begin{defn}We say that $\cX_1 \to \cX_0$ is the weighted
blowup with with vector $v_1$ or we say that $\cX_1 \to \cX_0$ is
the weighted blowup with weights $w_1=\frac{1}{r_1}(c_1,...,c_d)$.
Similarly, we say that $\cX_2 \to \cX_1$ is the weighted blowup with
vector $v_2$ or with weights $w_2=\frac{1}{p}(q_1,...,q_d)$.
\end{defn}

Notice that by construction $v_2= \frac{1}{r_2}(c_1e_1+...+c_de_d)$
with $c_i>0$ for all $i$. We can consider $\cX'_1 \to \cX_0$ the
weighted blowup  with vector $v_2$, then we have that $\cX'=\cup
\cU'_i= \cup \Spec \bC [{\sigma'_1}^\vee \cap \overline{M}]$ with
$\sigma'_i$ the cone generated by
$\{e_1,...,e_{i-1},v_2,e_{i+1},...,e_d\}$. Then clearly,
$$\cU'_i=\Spec \bC [{\sigma'_i}^\vee \cap \overline{M}]=\Spec \bC [\tau_i^\vee \cap \overline{M}] = \cV_i.$$
Notice also that the exceptional divisor
$\mathcal{F}$ of $\cX_2 \to \cX_1$ and the exceptional divisor
$\mathcal{F}'$ of $\cX'_1 \to \cX_0$ defines the same valuation given by the cone generated by $v_2$.

Suppose furthermore that $v_1$ is in the interior of $\sigma'_k$ for
some $k$. Then we can consider a weighted blowup $\cX'_2 \to \cX'_1$
with vector $v_1$. Notice also that the exceptional divisor
$\mathcal{E}$ of $\cX_2 \to \cX_1$ and the exceptional divisor
$\mathcal{E}'$ of $\cX'_1 \to \cX_0$ defines the same valuation
given by the cone generated by $v_1$.

\begin{rem} \label{interchange}
We say that $v_1$ and $v_2$ are {\it interchangeable} if $v_2$ is in
the interior of $\sigma_i$ for some $i$ and $v_1$ is in the interior
of $\sigma'_k$ for some $k$. It is easy to see that $v_1,v_2$ are
interchangeable if $b_j c_l \ne b_l c_j$ for all $j \ne l$.
\end{rem}

 In this situation, we say that the tower of weighted
blowups $\cX'_2 \to \cX'_1 \to \cX_0$ (with vectors $v_1,v_2$
successively) is obtained by reversing the order of the tower of
weighted blowup $\cX_2 \to \cX_1 \to \cX_0$ with vectors $v_2,v_1$.
 We have the following diagram
$$ \begin{CD} \cX_2 @>{\dashrightarrow}>> \cX'_2\\
@V{v_2}VV @VV{v_1}V \\
\cX_1 @. \cX'_1 \\
@V{v_1}VV @VV{v_2}V \\
\cX_0 @>{=}>> \cX_0.
\end{CD}$$

\subsection{complete intersections}
The toric variety $\cX_0 \cong \bC^d/v$ is a quotient by
$\bZ_n$-action with weights $\frac{1}{n}(a_1,...,a_d)$. For any
semi-invariant $\varphi= \sum \alpha_{i_1,...,i_d}
x_1^{i_1}...x_d^{i_d}$, we define
$$wt_v(\varphi):=\min \{ \sum_{j=1}^d \frac{ a_j }{n} i_j | \alpha_{i_1,...,i_d} \ne 0 \}.$$
For any vector $v' \in \overline{N}$, we define $wt_{v'}$ similarly.

Given a cyclic quotient of complete intersection variety, i.e. an
embedding $X=(\varphi_1=\varphi_2=...=\varphi_k=0) \subset
\bC^d/v=\cX_0$, where each $\varphi_i$ is a semi-invariant. Let
$\cX_1 \to \cX_0$ be a weighted blowup with vector $v_1$ and
exceptional divisor $\cE$. Let $Y$ be the proper transform of $X$ in
$\cX_1$. Then we say that the induce map $\phi: Y \to X$ is the
weighted blowup with vector $v_1$. Note that its exceptional set is
$E=\cE \cap Y$.

Quite often, we need to embed $X$ into a another ambient space. For
example, write $\varphi_k= f_0+f_1 f_2$ with $f_1$ being a
semi-invariant. We set
$v':=v+wt_v(f_1)e_{d+1}=(\frac{a_1}{n},\ldots, \frac{a_d}{n},
wt_v(f_1))$.

We then consider $X' \subset \cX'_0:=\bC^{d+1}/v'$ by setting:
$$\left\{ \begin{array}{ll}
\varphi'_j:=\varphi_j, &\text{for } 1 \le j \le k-1,\\
\varphi'_k:=f_0+x_{d+1}f_2, &\\
\varphi'_{k+1}:=x_{d+1}-f_1.&
\end{array} \right.
$$ It is easy to see that $X \cong X'$.

Let $\cX_1 \to \bC^d/v:=\cX_0$ be a weighted blowup with weights
$v_1=\frac{1}{r_1}(c_1,...,c_d)$. We set
$v'_1=(\frac{c_1}{r_1},...,\frac{c_d}{r_1}, wt_{v_1}(f_1))$ and let
$\cX'_1 \to \cX'_0$ be the weighted blowup with weights $v'_1$. Let
$Y, Y'$ be their proper transform in $\cX_1, \cX'_1$ respectively.
Then it is straightforward to check that $Y \cong Y'$ canonically.
Indeed, the isomorphism follows from the canonical isomorphism of $Y
\cap \cU_j \cong Y' \cap \cU'_j$ for $j \le d$ and $Y' \cap
\cU'_{d+1}=\emptyset$.

\begin{defn}\label{comp_reembed}
The weighted blowups $Y \to X$ with weights $v_1$ and $Y' \to X'$
with weight $v'_1$ are said to be compatible if the equations and
weights are defined as above.
\end{defn}

\subsection{2-ray game}
Turning back to the study of terminal threefolds.  We may write $P
\in X$ as $(\varphi=0) \subset \bC^4/ v_0$ (resp.
 $\varphi_1=\varphi_2=0 \subset \bC^5/ v_0$). By a weighted blowup
$f: Y \to X$ with weights $v_1=\frac{1}{r_1}(b_1,...,b_4)$ (resp.
$v_1=\frac{1}{r_1}(b_1,...,b_5)$), we denote the standard coordinate
chart as $U_i=\cU_i \cap Y$, $i=1,\ldots, 4$ (resp. $i=,\ldots, 5$)
and let $Q_i$ be the origin of $\cU_i$.

Given a divisor $D$ on any of the birational model, adding a
subscript, e.g. $D_X, D_Y$, will denote its proper transform in $X,
Y$ respectively (if its center is a divisor). Similarly for a
$1$-cycle $l$.

Let us consider a divisorial contraction $f: Y \to X$ to a point of
index $r$ with discrepancy $\frac{a}{n} > \frac{1}{n}$. Let $E$ be
the exceptional divisor of $f$.

Suppose that $Q_i \subset
Y$ is point of index $p>1$. We consider $g: Z \to Y$ be a divisorial contraction
 with discrepancy $\frac{1}{p}$. Let $F$ be the exceptional
divisor of $g$. We may write $g^*E=E_Z+\frac{\frak q}{p}F$.


Let $D_0 \ne E$ be a divisor on $Y$ passing through $Q_i$ such that
$l_0:=D_0 \cdot E$ is irreducible (possibly non-reduced). Let
$D_{0,X},D_{0,Z}$ be its proper transform on $X,Z$ respectively.
Notice that we have $$f^*D_{0,X}=D_0+ \frac{c_0}{n} E, \quad g^*D_0
= D_{0,Z}+\frac{q_0}{p} F$$ for some $c_0,q_0 \in \bZ_{>0}$.

Notice also that $l_{0,Z}=D_{0,Z} \cdot {E_Z}$. Clearly, we have
$$ g^* l_0=l_{0,Z}+ \frac{q_0}{p} l_F,$$
as a $1$-cycle, where $l_{0,Z}$ is the proper transform and $l_F:= F
\cdot E_Z$.


It is easy to see that
$$l_0 \cdot K_Y= D_0 \cdot E  \cdot K_Y = \frac{-ac_0}{n^2} E^3<0.$$
We also have
$$l_{0,Z} \cdot K_Z= D_{0,Z} \cdot E_Z \cdot K_Z=D_j\cdot E \cdot K_Y
+ \frac{\frak q q_0}{p^3}F^3=\frac{-ac_0}{n^2} E^3+\frac{\frak q
q_0}{p^3}F^3. \eqno(1)$$

Now for any curve $l \subset E$. Since $\rho(Y/X)=1$, we have that
$l$ is proportional to $l_0$ as a $1$-cycle. In other words, for any
divisor $D$ on $Y$,
$$l \cdot D = \alpha l_0 \cdot D,$$ for some $\alpha$.
We set $c=\alpha c_0$ (not necessarily an integer).
 Therefore,
$ l \cdot K_Y =  \frac{-ac}{n^2}E^3$.

We can write $g^*l=l_Z+\frac{q}{p}l_F$ for $\rho(Z/X)=2$ and the cone
of curves clearly generated by $l_Z$ and $l_F$ (note that we did not
assume that $q$ is an integer here). Similar computation shows that
$$l_Z \cdot K_Z= l \cdot K_Y +
\frac{q_iq}{p^3}F^3=\frac{-ac}{n^2}E^3+\frac{ \frak q q}{p^3}F^3.
\eqno(2)$$

Notice that $$l \cdot_E l_0 = l \cdot_Y D_0=\frac{c}{c_0} l_0 \cdot
D_0 = \frac{c}{c_0} D_0^2 E = \frac{c}{c_0} \frac{c_0^2}{r^2}E^3 =
\frac{c c_0}{r^2} E^3.$$

Also this quantity can be computed by $$ g^*l \cdot_{E_Z} g^*
l_0=g^*l \cdot_Z g^* D_0=l_Z \cdot D_{0,Z}+ \frac{q \frak q
q_0}{p^3}F^3 =l_Z \cdot_{E_Z} l_{0,Z} +\frac{q \frak q
q_0}{p^3}F^3.$$

If $l \ne l_0$, then $l_Z \cdot_{E_Z} l_{0,Z}  \ge 0$. So we have
$$\frac{c c_0}{n^2} E^3 \ge \frac{q \frak q
q_0}{p^3}F^3. $$ Compare with $(2)$, we have that for $l \ne l_0$,
$$q_0 l_Z \cdot K_Z \le  \frac{c}{n^2}(c_0-aq_0) E^3. \eqno(3)$$

We thus conclude the following criterion.
\begin{prop}
Let $D_0 \ne E$ be a divisor on $Y$ passing through a point $Q_i$ of
index $p$ such that $l_0:=D_0 \cdot E$ is irreducible (possibly
non-reduced). Let $g: Z \to Y$ be a extremal contraction to $Q_i$.
Let $D_{0,X},D_{0,Z}$ be the  proper transform of $D_0$ on $X,Z$
respectively. We write $$f^*D_{0,X}=D_0+ \frac{c_0}{n} E, \quad
g^*D_0 = D_{0,Z}+\frac{q_0}{p} F, \quad g^*E=E_Z+\frac{\frak
q}{p}F$$ for some $c_0,q_0, \frak q \in \bZ_{>0}$. Then $-K_{Z/X}$
is nef if the following inequalities holds:
$$ \left\{
\begin{array}{l}
T(f,g,D_0):=\frac{-ac_0}{n^2}
E^3+\frac{\frak q q_0}{p^3}F^3 \le 0,\\
c_0-a q_0 \le 0.
\end{array}
\right.$$
\end{prop}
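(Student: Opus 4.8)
The plan is to show that the two displayed inequalities force every extremal ray of $\overline{NE}(Z/X)$ to be $K_Z$-trivial or $K_Z$-negative on the $F$-side and $K_Z$-nonpositive on the other generator, so that $-K_{Z/X}$ pairs nonnegatively with every curve contracted by $Z \to X$. Since $\rho(Z/X) = 2$, the relative cone of curves $\overline{NE}(Z/X)$ is two-dimensional, and from the discussion preceding the proposition its two extremal generators are $l_F = F \cdot E_Z$ and a curve $l_Z$ arising as the proper transform of a suitable curve $l \subset E$. Thus it suffices to check $K_Z \cdot l_F \le 0$ and $K_Z \cdot l_Z \le 0$ for the relevant $l$'s.

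First I would dispose of the $l_F$ direction. The curve $l_F$ is contracted by $g : Z \to Y$, and $g$ is an extremal extraction of discrepancy $\frac1p$, so $K_Z = g^*K_Y + \frac1p F$; intersecting with $l_F$ gives $K_Z \cdot l_F = \frac1p F \cdot l_F = \frac1p F^2 \cdot E_Z < 0$ by negativity of the exceptional divisor along its fibres. (Equivalently one reads this off formula $(2)$ in the degenerate case.) So this direction is automatic and needs no hypothesis.

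The heart of the matter is the $l_Z$ direction, and here I would use inequality $(3)$ derived just above the proposition: for any curve $l \subset E$ with $l \ne l_0$ one has $q_0\, l_Z \cdot K_Z \le \frac{c}{n^2}(c_0 - a q_0) E^3$. Since $E^3 < 0$ (it is the exceptional divisor of a divisorial contraction, hence has negative top self-intersection) and $c = \alpha c_0 > 0$ for an effective curve class, the second hypothesis $c_0 - a q_0 \le 0$ makes the right-hand side $\ge 0$ times a negative number, i.e. $\le 0$, forcing $l_Z \cdot K_Z \le 0$. The only curve in $E$ not covered by this argument is $l_0$ itself, and for $l_0$ I would substitute directly into formula $(1)$: $l_{0,Z} \cdot K_Z = \frac{-ac_0}{n^2}E^3 + \frac{\mathfrak{q} q_0}{p^3}F^3 = T(f,g,D_0) \le 0$ by the first hypothesis. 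Putting the $l_0$ case together with the $l \ne l_0$ case, every curve $l_Z$ obtained as a proper transform of a curve in $E$ — in particular the extremal generator of $\overline{NE}(Z/X)$ lying on that side — satisfies $K_Z \cdot l_Z \le 0$.

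Finally I would assemble: both extremal generators $l_F$, $l_Z$ of the two-dimensional cone $\overline{NE}(Z/X)$ satisfy $-K_Z \cdot (\,\cdot\,) \ge 0$, hence $-K_{Z/X}$ is nef. The step I expect to require the most care is the sign bookkeeping in passing from "$l_Z \cdot K_Z \le 0$ for the chosen $l$'' to "$-K_{Z/X}$ nef'': one must be sure that as $l$ ranges over curves in $E$ the classes $l_Z$ actually sweep out the extremal ray of $\overline{NE}(Z/X)$ that is not $\bR_{\ge 0}[l_F]$, and that $c_0, q_0 > 0$ so that dividing by $q_0$ in $(3)$ preserves the inequality — both of which are already built into the setup preceding the statement, so the proof is essentially a two-line invocation of $(1)$ and $(3)$.
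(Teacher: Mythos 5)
Your route is the paper's own: the proposition is just the assembly of the displayed computations, namely $K_Z\cdot l_F<0$ automatically for the $g$-exceptional ray, formula $(1)$ plus the hypothesis $T(f,g,D_0)\le 0$ for $l_{0,Z}$, and inequality $(3)$ plus the hypothesis $c_0-aq_0\le 0$ for the proper transform $l_Z$ of any other curve $l\subset E$; since every curve contracted by $Z\to X$ is of one of these types, $-K_{Z/X}$ is nef. So the structure is fine.

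The one genuine error is your sign claim $E^3<0$, and it sits exactly at the step that carries the weight. Since $-E$ is $f$-ample and $E$ is contracted to a point, $-E|_E$ is an ample divisor on the surface $E$, hence $E^3=(E|_E)^2=(-E|_E)^2>0$; this is also forced by the paper's own formula $l_0\cdot K_Y=\frac{-ac_0}{n^2}E^3<0$ and is visible in the case studies (e.g.\ $E^3=\frac{2}{2(8l+1)}$). With your premise $E^3<0$, the hypothesis $c_0-aq_0\le 0$ makes the right-hand side of $(3)$ \emph{nonnegative}, so $(3)$ yields no bound at all on $l_Z\cdot K_Z$; the phrase ``$\ge 0$ times a negative number, i.e.\ $\le 0$'' papers over a sign contradiction. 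The repair is immediate and restores the paper's argument verbatim: since $c=\alpha c_0>0$ (both $l$ and $l_0$ are effective $f$-contracted classes and $\rho(Y/X)=1$), $q_0>0$, and $E^3>0$, the hypothesis $c_0-aq_0\le 0$ gives $q_0\,l_Z\cdot K_Z\le \frac{c}{n^2}(c_0-aq_0)E^3\le 0$, hence $l_Z\cdot K_Z\le 0$, which together with the $l_0$ and $l_F$ cases finishes the proof.
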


Indeed, one has a more effective way of calculation by using the "general elephant", if its restriction is irreducible. Let $\Theta \in |-K_Y|$ be an  elephant and $\theta=\Theta|_E$. We have
$$ \begin{array}{l}
g^*\Theta=\Theta_Z+\frac{1}{p}F, \\
f^*\Theta_X=\Theta+\frac{a}{n}E, g^*E=E_Z+\frac{\frak q}{p} F.
\end{array}
$$

Suppose that $\theta$ is irreducible, then one has that $-K_{Z/X}$ is nef
if
$$T(f,g):=\theta_Z \cdot K_Z =\frac{-a^2}{n^2}E^3+\frac{\frak q}{p^3}F^3 \le 0$$
since the second inequality holds automatically.



Suppose now that $-K_{Z/X}$ is nef, then we can play the so-called
"2-ray game" as in \cite{CH}. We have $Z \dashrightarrow Z^\sharp \to
Y^\sharp \to X$, where $Z \dashrightarrow Z^\sharp$ consists of a sequence of
flips and flops, $g^\sharp: Z^\sharp \to Y^\sharp$ is a divisorial contraction.

\begin{prop} Keep the notation as above. We have
$g^\sharp$ contracts $E_{Z^\sharp}$ and $f^\sharp$ is a divisorial contraction to $P \in X$
contracting
$F_{Y^\sharp}$.
\end{prop}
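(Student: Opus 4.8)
The plan is to trace the two divisors that are exceptional over $X$ — namely $E_Z$, the proper transform of $E$, and $F$, the $g$-exceptional divisor — through the $2$-ray game. First I would record the formal structure. Since flips and flops preserve relative Picard numbers, $\rho(Z^\sharp/X)=\rho(Z/X)=2$, so $\rho(Y^\sharp/X)=1$ and each of $g^\sharp,f^\sharp$ is the contraction of a single extremal ray. Because $Z^\sharp\to X$ is birational, $g^\sharp$ cannot be of fibre type, so (being the divisorial contraction at which the game stops) it is divisorial; and because $X$ is $\bQ$-factorial, $f^\sharp$ cannot be a small contraction, hence it too is divisorial. The only prime divisors contracted by $Z^\sharp\to X$ are the proper transforms $E_{Z^\sharp}$ and $F_{Z^\sharp}$, so $g^\sharp$ contracts one of them and $f^\sharp$ the other; thus the whole content is that $g^\sharp$ contracts $E_{Z^\sharp}$.

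Next I would dispose of the easy case. Write $\overline{NE}(Z/X)=\langle R_F,R\rangle$ with $R_F:=\overline{NE}(Z/Y)$ the ray contracted by $g$; since $g$ has positive discrepancy $1/p$ one has $K_Z\cdot R_F<0$, and since $-K_{Z/X}$ is nef one has $K_Z\cdot R\le 0$. The ray $R_F$ is the ray of the covering family of curves of $F$; as $R\ne R_F$, a general curve of $F$ is not contracted by $\mathrm{cont}_R$, so $\mathrm{cont}_R$ does not contract $F$. Hence if $\mathrm{cont}_R$ is already divisorial, then $Z^\sharp=Z$, $g^\sharp=\mathrm{cont}_R$, and $g^\sharp$ necessarily contracts $E_Z$; then $f^\sharp$ contracts the remaining divisor $F_{Y^\sharp}$, whose centre in $X$ is $P=f(g(F))$. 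As a consistency check I would also verify, using $g^*E=E_Z+\tfrac{\frak q}{p}F$, the projection formula, and the anti-ampleness of $E|_E$ (so $E^3>0$) and of $F|_F$, that on $Z$ one has $E_Z\cdot R<0$ and $F\cdot R\ge 0$: the ray $R$ ``wants to contract $E_Z$''.

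The substantive case is that $\mathrm{cont}_R$ is small, so that $Z\dashrightarrow Z^\sharp$ is a genuine, nontrivial sequence of flips and flops; the game terminates because $\rho(\cdot/X)=2$ forces only finitely many models, and it ends at a model carrying the divisorial contraction $g^\sharp$. Here I would argue by contradiction: suppose $g^\sharp$ contracts $F_{Z^\sharp}$. Then $f^\sharp$ contracts $E_{Y^\sharp}$, i.e.\ $f^\sharp\colon Y^\sharp\to X$ is a $\bQ$-factorial terminal extremal divisorial extraction of $X$ extracting the valuation of $E$ — the same valuation as $f\colon Y\to X$. By uniqueness of $\bQ$-factorial terminal extremal divisorial extractions of a fixed divisorial valuation, $Y^\sharp\cong Y$ over $X$; then $g^\sharp\colon Z^\sharp\to Y^\sharp\cong Y$ is such an extraction of $Y$ extracting the valuation of $F$, so by uniqueness again $Z^\sharp\cong Z$ over $Y$, hence over $X$. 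But a nontrivial sequence of flips and flops over $X$ moves into a different chamber of the movable cone of $Z/X$, contradicting $Z^\sharp\cong Z$. Therefore $g^\sharp$ contracts $E_{Z^\sharp}$; then $f^\sharp$ contracts $F_{Y^\sharp}$, the only remaining divisor exceptional over $X$, and its centre in $X$ equals the centre of the valuation of $F$, namely $P=f(g(F))$, so $f^\sharp$ is a divisorial contraction to $P$.

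The step I expect to be the main obstacle is exactly this last argument — ruling out that the $2$-ray game ``turns back'' and contracts $F$ first — in the generality stated, i.e.\ without yet knowing that $Z\to Y\to X$ embeds in a tower of toric weighted blowups. The cleanest route relies on the uniqueness statement for extremal divisorial extractions; if one prefers to avoid it, the alternative is to run the bookkeeping directly, showing that every flipping/flopping locus of $Z\dashrightarrow Z^\sharp$ lies in a proper transform of $E_Z$ (which meets $F$ only along $l_F\in R_F$), so that $F_{Z^\sharp}\cdot R_F^\sharp<0$ persists for the surviving transform $R_F^\sharp$ of $R_F$ and forces the contracted divisor of $g^\sharp$ to be $E_{Z^\sharp}$. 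Either route needs the intersection estimates on $Z$ recorded above together with an analysis of how those numbers change under a flip or a flop.
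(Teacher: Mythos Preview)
Your core argument is the same as the paper's: assume $g^\sharp$ contracts $F_{Z^\sharp}$, deduce that $Y^\sharp\to X$ extracts the valuation of $E$, invoke uniqueness of the $\bQ$-factorial terminal extremal extraction of a fixed valuation to get $Y^\sharp\cong Y$, repeat to get $Z^\sharp\cong Z$, and reach a contradiction. The extra case split (whether $\mathrm{cont}_R$ is already divisorial) and the consistency check $E_Z\cdot R<0$, $F\cdot R\ge 0$ are not needed---the paper runs the contradiction uniformly---but they do no harm.

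There is one point you skate over. You assert that $f^\sharp$ is a divisorial contraction because $X$ is $\bQ$-factorial, which rules out $f^\sharp$ being small; but ``divisorial contraction'' here means a $K$-negative extremal contraction, and nothing in the 2-ray game forces the residual morphism $f^\sharp\colon Y^\sharp\to X$ to be $K$-negative. The paper supplies this step explicitly: for any curve $\gamma\subset F_{Y^\sharp}$ one picks a very ample $H$ on $Y^\sharp$, writes ${f^\sharp}^*H_X=H+\mu F_{Y^\sharp}$ with $\mu>0$, and from $0=\gamma\cdot{f^\sharp}^*H_X$ gets $\gamma\cdot F_{Y^\sharp}<0$; then $\gamma\cdot K_{Y^\sharp}=a(F,X)\,\gamma\cdot F_{Y^\sharp}<0$ since the discrepancy $a(F,X)>0$. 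You should add this (or the equivalent observation that $-F_{Y^\sharp}$ is $f^\sharp$-ample because $\rho(Y^\sharp/X)=1$ and $F_{Y^\sharp}$ is exceptional) to complete the proof.
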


\begin{proof}
Since there are only two exceptional divisors $E_{Z^\sharp}$ and $F_{Z^\sharp}$
on $Z^\sharp$ over $X$. Suppose on the contrary that $g^\sharp$ contracts
$F_{Z^\sharp}$. Then $E_{Y^\sharp}$ is the only exceptional on $Y^\sharp/X$. Moreover,
$\rho(Y^\sharp/X)=1$. We thus have $Y^\sharp \cong Y$ for $E$ and $E_{Y^\sharp}$
clearly defines the same valuation. Then one sees that $Z/Y$ has
exceptional divisor $F$ and $Z^\sharp/Y$ has exceptional divisor $F_{Z^\sharp}$
which again defines the same valuation. Hence $Z \cong Z^\sharp$, which is
absurd.

Notice that $\rho(Y^\sharp/X)=1$, $Y^\sharp$ is terminal $\bQ$-factorial and
$F_{Y^\sharp}$ is the support of the exceptional set. It suffices to show
that $K_{Y^\sharp/X}$ is $-f^\sharp$-ample. Let $\gamma \subset F_{Y^\sharp}$ be a
curve. Pick any very ample divisor $H$ on $Y^\sharp$, then we have
${f^\sharp}^*H_X=H+\mu F_{Y^\sharp}$ for some $\mu>0$. Intersect with $\gamma$, we
have
$$0= \gamma \cdot {f^\sharp}^*H_X=\gamma \cdot H + \mu \gamma \cdot
F_{Y^\sharp}.$$ Hence $\gamma \cdot F_{Y^\sharp} <0$. Now
$$ \gamma \cdot K_{Y^\sharp}= \gamma \cdot a(F_{Y^\sharp},X) F_{Y^\sharp} = \gamma
\cdot a(F,X) F_{Y^\sharp} <0,$$ for the discrepancy of $F$ over $X$ is
positive and depends only on the its valuation.
\end{proof}

\subsection{weighted blowups and 2-ray game}
We fix an embedding $P \in X
\hookrightarrow \cX_0$ such that the divisorial contraction $f:Y \to
X$ is given by the weighted blowup $\cX_1 \to \cX_0$ with weights
$v_1$. That is, $Y$ is the proper transform of $X$ in $\cX_1$.
Let $g:Z \to Y$ be a divisorial contraction with minimal discrepancy over a point $Q_i$ of
index $p >1$

Suppose  that, under such embedding, the following hypotheses holds.
\noindent {\bf Hypothesis $\flat$.}
\begin{enumerate}
\item The divisorial
extraction $g:Z \to Y$ is given by a weighted blowup $\cX_2 \to \cX_1$ over a
point $Q_i$ with vector $v_2$.

\item The vectors $v_1,v_2$ are interchangeable (cf. Remark
\ref{interchange}).

\item $-K_{Z/X}$ is nef.
\end{enumerate}

 Then we have the following diagram.
$$ \begin{CD}
Z^\sharp @<{\dashleftarrow}<< Z @>{\hookrightarrow}>> \cX_2 @>{\dashrightarrow}>> \cX'_2 @<{\hookleftarrow}<< Z'\\
@V{g^\sharp}VV @V{g}VV @V{v_2}VV @VV{v_1}V @VV{g'}V\\
Y^\sharp @.  Y @>{\hookrightarrow}>> \cX_1 @.                   \cX'_1 @<{\hookleftarrow}<< Y'\\
@V{f^\sharp}VV @V{f}VV @V{v_1}VV @V{\eta}V{v_2}V @VV{f'}V\\
X @<{=}<< X @>{\hookrightarrow}>> \cX_0 @>{=}>>             \cX_0 @<{\hookleftarrow}<< X,\\
\end{CD}$$
where $Z^\sharp \to Y^\sharp \to X$ is the output of the two-rays game and $Z',
Y'$ are proper transform of $X$ in $\cX'_2,\cX'_1$ respectively.

\begin{thm} \label{compatible}
Keep the notation as above and suppose that Hypothesis $\flat$
holds. Then $Y^\sharp \cong Y'$ and $Z^\sharp \cong Z'$. In
particular, both $f^\sharp$ and $g^\sharp$ are weighted blowups and
both $f'$ and $g'$ are divisorial contractions to a point.
\end{thm}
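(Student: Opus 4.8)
The plan is to show that the two towers $Z^\sharp \to Y^\sharp \to X$ and $Z' \to Y' \to X$ coincide by exploiting the rigidity of the minimal model program run in the presence of $-K_{Z/X}$ nef, combined with the explicit toric description of the reversed tower. First I would record the structural facts already available: by Proposition (the one preceding Theorem \ref{compatible}), $g^\sharp$ contracts $E_{Z^\sharp}$ and $f^\sharp : Y^\sharp \to X$ is a divisorial contraction to $P$ contracting $F_{Y^\sharp}$; meanwhile, by the construction in Section 2.2 (the ``interchange'' construction, Remark \ref{interchange}) together with the compatibility of weighted blowups on complete intersections (Definition \ref{comp_reembed} and the discussion of proper transforms), $f' : Y' \to X$ is the weighted blowup with vector $v_2$, so $F_{Y'}$ and $F_{Y^\sharp}$ define the same valuation over $X$, and $g' : Z' \to Y'$ is the weighted blowup with vector $v_1$, so $E_{Z'}$ and $E_{Z^\sharp}$ define the same valuation over $Y'$ (equivalently over $X$). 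Thus on both $Z^\sharp$ and $Z'$ the two exceptional divisors over $X$ are, as valuations, exactly $\{E, F\}$ (where I write $E, F$ loosely for the valuations of $E_Z$ and $F$), and on both $Y^\sharp$ and $Y'$ the unique exceptional divisor over $X$ is the valuation of $F$.

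Next I would pin down $Y^\sharp$. Since $Y^\sharp$ is $\bQ$-factorial terminal with $\rho(Y^\sharp/X)=1$ and exceptional locus a prime divisor realizing the valuation of $F$ over $X$, and since $Y' \to X$ is a divisorial contraction (hence $\bQ$-factorial, and terminal if its proper transform is terminal) realizing the same valuation over $X$ with $\rho(Y'/X)=1$, the two are isomorphic over $X$ by the standard uniqueness of extremal divisorial contractions / divisorial extractions attached to a fixed divisorial valuation. The subtlety flagged in the introduction — that $Y'$ need not a priori be terminal — has to be dispatched here: I would argue that because the reversed tower is obtained by subdividing the cone in the opposite order, $Y'$ is obtained from $Y^\sharp$ by the very same 2-ray game run on $Z$ (the combinatorics of the fan $\overline{\overline\Sigma}$ versus its reversal literally exhibit $\cX'_2 \to \cX'_1 \to \cX_0$ as the outcome of flips/flops plus a divisorial contraction on $\cX_2 \to \cX_1 \to \cX_0$, restricting to the corresponding statement on proper transforms), so $Y'$ inherits terminality and $\bQ$-factoriality from the MMP, and there is nothing left to check. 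This is the step I expect to be the main obstacle: making precise that the toric ``reversal of vectors'' diagram restricts on proper transforms to exactly the birational maps produced by the 2-ray game, i.e. that the flip/flop $\cX_2 \dashrightarrow \cX'_2$ restricts to the flip/flop $Z \dashrightarrow Z^\sharp$ and that $\cX'_2 \to \cX'_1$ restricts to $Z^\sharp \to Y^\sharp$. The key point is that a $K$-flip/flop is unique, so once I check that the toric map $\cX_2 \dashrightarrow \cX'_2$ restricts to a small birational map which is an isomorphism in codimension one and which is $-K$-trivial (flop) or $K$-positive on the flipped side (flip) with respect to the $X$-morphism structure, it must agree with the MMP output; and $-K_{Z/X}$ nef (Hypothesis $\flat$(3)) is exactly what guarantees the 2-ray game proceeds this way rather than contracting $F_{Z^\sharp}$ first.

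Finally, granting $Y^\sharp \cong Y'$ over $X$, I would identify $Z^\sharp$ with $Z'$ by the same uniqueness argument one level up: both $g^\sharp : Z^\sharp \to Y^\sharp$ and $g' : Z' \to Y' \cong Y^\sharp$ are divisorial contractions with $\rho = 1$ over $Y^\sharp$, terminal and $\bQ$-factorial (terminality of $Z'$, like that of $Y'$, following from the identification with the 2-ray game output), whose exceptional divisor realizes the common valuation of $E_{Z'} = E_{Z^\sharp}$ over $Y^\sharp$; hence $Z^\sharp \cong Z'$ over $Y^\sharp$, and a fortiori over $X$. The ``in particular'' clause is then immediate: $f^\sharp$ is identified with $f'$, which by construction is the weighted blowup with vector $v_2$, and $g^\sharp$ is identified with $g'$, the weighted blowup with vector $v_1$; both are divisorial contractions to a point (the origin of the relevant toric chart), as claimed. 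I would also remark that the coefficient identity $\tfrac{a}{n} = \tfrac{a'}{n}\cdot\tfrac{q'}{p'} + \tfrac{\mathfrak q}{p'}$ and the discrepancy data of Theorem 1.2 are then read off directly from the weight vectors $v_1, v_2$ via the pullback formula $v_2 = \tfrac{q_i}{p} v_1 + \widehat{w_2}$ ($\sharp$), but that is bookkeeping for the next section rather than part of this proof.
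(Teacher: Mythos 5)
Your opening moves (recording that $g^\sharp$ contracts $E_{Z^\sharp}$, that $f^\sharp$ contracts $F_{Y^\sharp}$, and that $F_{Y'}$, $F_{Y^\sharp}$ carry the same valuation) match the paper, but the core of your argument has a genuine gap. You reduce everything to the ``standard uniqueness of the extremal divisorial extraction of a fixed valuation,'' which requires knowing that $f':Y'\to X$ is itself an extremal contraction from a terminal $\bQ$-factorial variety with $\rho(Y'/X)=1$ and exceptional locus the prime divisor $F_{Y'}$ --- precisely the properties the introduction warns are not known a priori for the proper transforms $Y',Z'$. Your proposed fix --- that the toric reversal $\cX_2\dashrightarrow\cX'_2$, $\cX'_2\to\cX'_1$ restricts on proper transforms to exactly the steps of the 2-ray game, justified by uniqueness of flips/flops --- is not an argument but a restatement of the theorem: $Z\dashrightarrow Z^\sharp$ is in general a \emph{sequence} of flips and flops, so there is no single flip/flop uniqueness statement to invoke; there is no given contraction over which the restricted toric map and the first MMP step could be compared; and you never verify that the restriction is small, nor compute the sign of $K$ on it. So the ``main obstacle'' you flag is left unfilled, and filling it your way would amount to reproving the statement.

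The paper's proof avoids terminality and $\bQ$-factoriality of $Y'$ altogether by an ampleness/Proj argument. First, the canonical chart isomorphism $\cV_i\cong\cU'_i$ from Subsection 2.2 induces $Z\cap\cV_i\cong Y'\cap\cU'_i$, and since $F$ is irreducible this shows $F_{Y'}:=\cF'\cdot Y'$ is irreducible and coincides with the exceptional set of $f'$, with the same valuation as $F_{Y^\sharp}$ (a point you assert rather than prove). Second --- and this is the device missing from your proposal --- $-\cF'$ is $\eta$-ample on the ambient toric variety, hence $-F_{Y'}$ is $f'$-ample by restriction, while $-F_{Y^\sharp}$ is $f^\sharp$-ample by the preceding Proposition; therefore
$$Y^\sharp=\Proj\Bigl(\bigoplus_{m\ge 0} f^\sharp_*\OO(-mF_{Y^\sharp})\Bigr)\cong \Proj\Bigl(\bigoplus_{m\ge 0} f'_*\OO(-mF_{Y'})\Bigr)=Y',$$
the two graded algebras agreeing because they are determined by the common valuation. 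The same argument gives $Z^\sharp\cong Z'$. If you want to salvage your uniqueness-based route, you must replace the appeal to terminality of $Y'$ by exactly this relative ampleness of $-F_{Y'}$ inherited from the toric morphism; as written, the proposal does not establish the isomorphisms.
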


\begin{proof}
Let $\cF$ be the exceptional divisor of $\cX_2 \to \cX_1$. It is the
excetional divisor induced by the vector $v_2$. Hence its proper
transform $\cF'$ in $\cX'_1$ is the exceptional divisor of
$\eta:\cX'_1 \to \cX_0$. Recall that by the construction in
Subsection 2.2, there is a canonical isomorphism $\cV_i \cong
\cU'_i$ for some $i$, where $\cV_i \subset \cX_2$ and $\cU'_i
\subset \cX'_1$ are coordinate charts. Surely, we have an induced
isomorphism $Z \cap \cV_i \cong Y' \cap \cU'$. Since $F$ is
irreducible and
$$F \cap \cV_i = (\cF \cdot Z)  \cap \cV_i \cong (\cF' \cdot Y') \cap \cU'_i.$$
It follows that $F_{Y'}:=\cF' \cdot Y'$ is irreducible,
which coincides with the exceptional set. On the other hand, the
proper transform of $F$ in $Y^\sharp$ is $F_{Y^\sharp}$, which is
the exceptional divisor of $f^\sharp$. One sees immediately that
$F_{Y'}$ and $F_{Y^\sharp}$ define the same valuation in the
function field.

Note that $-\cF'$ is clearly $\eta$-ample. It follows that $-F_{Y'}$ is $f'$-ample. Hene
we  have
$$ Y^\sharp= \Proj ( \oplus_{m \ge 0} f^\sharp_* \mathcal{O}(-mF_{Y^\sharp})) \cong \Proj ( \oplus_{m \ge 0} f'_* \mathcal{O}(-mF_{Y'})) = Y'.$$

The proof for $Z^\sharp \cong Z'$ is similar.
\end{proof}


\section{case studies}
In this section we study divisorial contractions to a higher index
point with non-minimal discrepancy case by case. For each case, we
consider the extraction over a higher index point. We shall show
that the Hypothesis $\flat$ holds for all tower by extracting over a
highest index point and for some other extraction over another
higher index point. Hence, in particular,  Theorem 1.1 follows.

Moreover the output of 2-ray game and interchanging vectors of
weighted blowups coincide. Hence we end up with a diagram for each
case, where every vertical map is a weighted blowup. Theorem 1.2
then follows by checking the diagram for each case.

\subsection{discrpancy=$4/2$ over a cD/2 point}
Let $Y \to X$ be a divisorial contraction to a cD/2 point $P \in X$
with discrepancy $2$. By Kawakita's work (cf. \cite{Kk11}), it is known that there
exists an embedding
$$ \left\{ \begin{array}{l} \varphi_1: x_1^2+x_4x_5+p(x_2,x_3,x_4)=0, \\
\varphi_2: x_2^2+q(x_1,x_3,x_4)+x_5=0 \end{array} \right.$$ with
$v=\frac{1}{2}(1,1,1,0,0)$. Also $f$ is the weighted blowup with
weights $v_1=(4l+1,4l,2,1,8l+1)$ or $(4l,4l-1,2,1,8l-1)$.

We treat this case in greater detail. The remaining cases can be
treated similarly. Note that we can write
$p(x_2,x_3,x_4)=x_4p_1(x_2,x_3,x_4)+p_0(x_2,x_3)$. Therefore,
replacing $x_5$ by $x_5+p_1(x_2,x_3,x_4)$, we may assume that
$\varphi_1=x_1^2+x_4x_5+p(x_2,x_3)$.

\noindent {\bf Case 1.} $v_1=(4l+1,4l,2,1,8l+1)$.\\
Note that $wt_{v_1}(p(x_2,x_3)) \ge 8l+1$, $wt_{v_1}(q(x_1,x_3,x_4))
\ge 8l$.

\noindent {\bf Step 1.} We search for points in $Y$ with index $>1$. This can only happen over $Q_i$. Clearly, $Q_1, Q_2 \not \in Y$.

We first look at $Q_3$. By computation of local charts, one sees that $Q_3 \in
\cX_1$ is a quotient singularity of type $\frac{1}{4}(1,2,1,3,3)$.


\noindent {\bf Claim 1.} $Q_3 \not \in Y$ and $x_3^{4l} \in \varphi_2$. \\
To see this, according to Kawakita's description, there is only one
non-hidden non-Gorenstein singularity and also the hidden
singularities has index at most $2$. Hence $Q_3 \not \in Y$. In
other words, one must have either $x_3^{4l+1} \in \varphi_1$ or
$x_3^{4l} \in \varphi_2$. Note that $x_3^{4l+1} \not \in \varphi_1$
otherwise $\varphi_1$ is not a semi-invariant. We thus conclude that
$x_3^{4l} \in \varphi_2$.

We can see  that $Q_5 \in \cX_1$ is a quotient singularity  of type
$\frac{1}{2(8l+1)}(6l+1,10l+1,1,12l+2,4l)$ with index $2(8l+1)$. We
set $w_2=\frac{1}{2(8l+1)}(6l+1,10l+1,1,12l+2,4l)$ so that
$v_2=\frac{1}{2}(2l+1,2l+1,1,2,4l)$.


\noindent {\bf Remark.} The point $Q_4 \in Y$ is a "hidden" cD/2
point (see \cite[p.68]{Kk05}). By the classification of Hayakawa
(cf. \cite{HaII}), any divisorial contraction $g: Z \to Y$ has the
property that $g^*E=E_Z+\frac{t}{2}F$ with $t>0$ even. Therefore,
$a(F,X)=\frac{t}{2}2+\frac{1}{2} >2$. Hence our theorem does not
hold for arbitrary extraction over a point $Q$ of index $r>1$.

\noindent {\bf Step 2.} The weighted blowup $\cX_2 \to \cX_1$ with weights
$w_2$ gives a divisorial contraction $g: Z \to Y$ of discrepancy
$\frac{1}{2(8l+1)}$.\\
To see this, note that the local equation of $Q_5$ is given by
$$ \left\{ \begin{array}{l} \overline{\varphi_1}: \overline{x_1}^2+\overline{x_4}+\overline{p}(\overline{x_2},\overline{x_3})=0, \\
\overline{\varphi_2}: \overline{x_2}^2+\overline{q}(\overline{x_1},\overline{x_3},\overline{x_4})+\overline{x_5}=0.
\end{array} \right.$$
We have natural isomorphism between $o \in
\bC^3/\frac{1}{2(8l+1)}(6l+1,10l+1,1)=:\cY_1$ and $Q_5 \in \bC^5/w_2$. The
only extremal extraction over $o$ with discrepancy
$\frac{1}{2(8l+1)}$ is the Kawamata blowup $\cY_2 \to \cY_1$, which is the weighted blowup with weights $\overline{w_2}=\frac{1}{2(8l+1)}(6l+1,10l+1,1)$. Since
$\overline{x_3}^{4l} \in \overline{\varphi_2}$, one sees that
$$\left\{
\begin{array}{l}wt_{w_2}(\overline{x_4})= wt_{\overline{w_2}}( \overline{x_1}^2)=wt_{\overline{w_2}}( \overline{x_1}^2+\overline{p}(\overline{x_2},\overline{x_3})), \\
wt_{w_2} (\overline{x_5} )= wt_{\overline{w_2}}(
\overline{x_3}^{4l})= wt_{\overline{w_2}} (
\overline{x_2}^2+\overline{q}(\overline{x_1},\overline{x_3},\overline{x_4})).
\end{array} \right.$$

Therefore, the weighted blowup $\cX_2 \to \cX_1$ with weights $w_2$
and $\cY_2 \to \cY_1$ are compatible (cf. Subsection 2.3). In
particular, the only divisorial contraction $g: Z \to Y$ of
discrepancy $\frac{1}{2(8l+1)}$ is obtained by weighted blowup with
weights $w_2$ (with vector $v_2$). This verifies Hypothesis
$\flat$(1). The hypothesis $\flat$(2) can be verified trivially.

\noindent{\bf Step 3.} We now checked the numerical conditions for
2-ray game. By Kawakita's Table (cf. \cite[Table 1,2,3]{Kk05}), we
have
$$E^3= \frac{2}{2(8l+1)}, \quad F^3=\frac{(2(8l+1))^2}{(6l+1)(10l+1)}.$$

Note that the exceptional divisor $E$ can be realized as a
$\bZ_2$-quotient of complete intersection
$$\tilde{E}:=(\varphi_{1,8l+2}=\varphi_{2,8l}=0) \subset \bP(4l+1,4l,2,1,8l+1),$$
where $\varphi_{i,k}$ denotes the homogeneous part of $\varphi_i$ of
$v_1$-weight $k/2$. Indeed, if we pick $D_{0,X}=(x_3=0)$, which is
an elephant in $|-K_X|$,  we have that $E \cap D_0$ is defined by
$\bZ_2$-quotient of the complete intersection
$$\left\{
\begin{array}{l}
x_3=0,\\
{\varphi_{1,8l+2}}_{|x_3=0}=x_1^2+x_4x_5,\\
{\varphi_{2,8l}}_{|x_3=0}=x_2^2+ q_{8l}(x_1,0,x_4).$$
\end{array}
\right.$$

If $q_{8l}(x_1,0,x_4)$ is not a perfect square, then this is clearly
irreducible. If $q_{8l}(x_1,0,x_4)$ is a perfect square, then this
is reducible on $\tilde{E}$ but irreducible on $E$ after the
$\bZ_2$-quotient.

 Therefore, we can simply check
$$T(f,g)=\frac{1}{2(8l+1)}(-8 + \frac{4l}{(6l+1)(10l+1)}) <0$$
to conclude that $-K_Z/X$ is nef. This verifies Hypothesis
$\flat$(3).


\noindent {\bf Step 4.} The weighted blowup $\cX' \to \cX_0$ with vector
$v_2$ gives a divisorial contraction $f': Y' \to X$ of discrepancy
$\frac{1}{2}$.\\
This follows from Theorem \ref{compatible}.
In fact, we can check this directly as well by considering a
re-embedding $\overline{X} \subset \bC^4/\frac{1}{2}(1,1,1,0)$
defined by
$$\varphi: x_1^2+x_2^2x_4+q(x_1,x_3,x_4) x_4+p(x_2,x_3,x_4)$$ with
$x_3^{4l}x_4 \in \varphi$. Let
$\overline{v_2}=\frac{1}{2}(2l+1,2l+1,1,2)$, then one sees that the
weighted blowup $Y' \to X$ with weight $v_2$ is compatible with
weighted blowup of $\overline{Y} \to \overline{X}$ with weigh
$\overline{v_2}$. It is easy to see that $wt_{v_1}(p) \ge 8l+1$
implies that $wt_{\overline{v_2}}(p)
> 2l$ and $wt_{v_1}(q) \ge 8l$ implies that $wt_{\overline{v_2}}(q
x_4)\ge 2l+1$. Therefore, the weighted blowup  $\overline{Y} \to
\overline{X}$ with weight $\overline{v_2}$ is indeed the weighted
blowup given in Proposition 5.8 of \cite{HaII}, which is a
divisorial contraction with minimal discrepancy $\frac{1}{2}$. Hence
so is $Y' \to X$.


\noindent{ \bf Step 5.} One sees that $v_1=\frac{6l+1}{2}
e_1+\frac{6l-1}{2}e_2+\frac{3}{2}e_3+v_2+\frac{12l+2}{2}e_5$.
Therefore, one consider the weighted blowup $\cX'_2 \to \cX'_1$ with
weights $w'_2=\frac{1}{2}(6l+1,6l-1,3,2,12l+2)$ over $Q'_4 \in
\cX'_1$. Let $Z'$ be the proper transform in $\cX'_2$.  Notice that
$Z' \to Y'$ is a divisorial contraction over $Q'_4$ with discrepancy
$\frac{3}{2}$. This is indeed the map in Case 1 of Subsection 3.2
(after re-embedding into $\bC^4/v$ as in Step 4.)

We summarize this case into following diagram.
$$\begin{CD}
Z @>{\dashrightarrow}>> Z' \\
@V{\frac{1}{2(8l+1)}}V{wt=w_2}V @V{\frac{3}{2}}V{wt=w'_2}V \\
Q_5 \in Y @.  Y' \ni Q'_4 \\
@V{\frac{4}{2}}V{wt=w_1  }V @V{\frac{1}{2}}V{wt=w'_1}V\\
X @>=>> X
\end{CD}$$

Where $$ \begin{array}{ll} w_1=v_1=(4l+1,4l,2,1,8l+1), & w'_1=v_2=\frac{1}{2}(2l+1,2l+1,1,2,4l), \\
  w_2=\frac{1}{2(8l+1)}(6l+1,10l+1,1,12l+2,4l),  & w'_2=\frac{1}{2}(6l+1,6l-1,3,2,12l+2). \end{array}$$

It is easy to verify the condition $\sharp$ that
$$v_2= \frac{4l}{2(8l+1)}v_1+\widehat{w_2}, \quad v_1=\widehat{
w'_2}+v_2.$$

\noindent{\bf Case 2.} $v_1=(4l,4l-1,2,1,8l-1)$.\\
We first look at $Q_3$,
which is a quotient singularity of type $\frac{1}{4}(2,3,1,3,1)$ in
$\cX_1$.

\noindent
{\bf Claim.} $Q_3 \not \in Y$ and $x_3^{4l} \in \varphi_1$. \\
To see this, according to Kawakita's description, there is only one
non-hidden non-Gorenstein singularity and also the hidden
singularities has index at most $2$. Hence $Q_3 \not \in Y$. In
other words, one must have either $x_3^{4l} \in \varphi_1$ or
$x_3^{4l-1} \in \varphi_2$. Note that $x_3^{4l-1} \not \in \varphi_2$
otherwise $\varphi_2$ is not a semi-invariant. We thus conclude that
$x_3^{4l} \in \varphi_1$.

Next notice that $Q_5 \in \cX_1$ is a quotient singularity of type
$\frac{1}{2(8l-1)}(10l-1,6l-1,1,4l,12l-2)$. We set
$w_2=\frac{1}{2(8l-1)}(10l-1,6l-1,1,4l,12l-2)$ so that $v_2
=\frac{1}{2}(6l+1,6l-1,3,2,12l-2)$.

As before, the weighted blowup $\cX_2 \to \cX_1$ with vector $v_2$
gives a divisorial contraction $g: Z \to Y$ of discrepancy
$\frac{1}{2(8l-1)}$, which is compatible with the Kawamata blowup.
This can be seen by examining the local equation at $Q_5$ and the
weights as in Case 1.
$$ \left\{ \begin{array}{l} \overline{x_1}^2+\overline{x_4}+\overline{p}(\overline{x_2},\overline{x_3},\overline{x_4})=0, \\
\overline{x_2}^2+\overline{q}(\overline{x_1},\overline{x_3},\overline{x_4})+\overline{x_5}=0
\end{array} \right.$$

We now checked the numerical conditions for 2-ray game. We have
$$E^3= \frac{2}{2(8l-1)}, \quad F^3=\frac{(2(8l-1))^2}{(6l-1)(10l-1)},$$
and $$T(f,g)=\frac{1}{2(8l-1)} (-8+\frac{2}{10l-1} ) <0.$$

We pick $D_{0,X}=(x_3=0)$, which is  an elephant in $|-K_X|$. One
sees  that $E \cap D_0$ is defined by $\bZ_2$-quotient of the
complete intersection
$$\left\{
\begin{array}{l}
x_3=0,\\
{\varphi_{1,8l}}_{|x_3=0}=x_1^2+x_4x_5,\\
{\varphi_{2,8l-2}}_{|x_3=0}=x_2^2+ q_{8l-2}(x_1,0,x_4).$$
\end{array}
\right.$$

Same argument as in Case 1 shows that $D_0 \cap E$ is irreducible.
 Therefore, we can simply check
$$T(f,g)=\frac{1}{2(8l-1)} (-8+\frac{2}{10l-1} ) <0.$$
to conclude that $-K_Z/X$ is nef. This verifies Hypothesis
$\flat$(3).

Hence
$-K_Z/X$ is nef.

 The weighted blowup $\cX'_1 \to \cX_0$ with vector
$v_2$ gives a divisorial contraction $f': Y' \to X$ of discrepancy
$\frac{3}{2}$.
This can be seen to be a compatible re-embedding of Kawakita's description by eliminating $x_5$.

One sees that $v_1=\frac{2l-1}{2}
e_1+\frac{2l-1}{2}e_2+\frac{1}{2}e_3+v_2+\frac{4l}{2}e_5$.
Therefore, one consider the weighted blowup $\cX'_2 \to \cX'_1$ with
weights $w'_2=\frac{1}{2}(2l-1,2l-1,1,2,4l)$ over $Q'_4 \in \cX'_1$.
Let $Z'$ be the proper transform in $\cX'_2$, then one can easily
check that $Z'
\to Y'$ is a divisorial contraction over $Q'_4$ with discrepancy
$\frac{1}{2}$.

We summarize this case into following diagram.
$$\begin{CD}
Z @>{\dashrightarrow}>> Z' \\
@V{\frac{1}{2(8l-1)}}V{wt=w_2}V @V{\frac{1}{2}}V{wt=w'_2}V \\
Q_5 \in Y @.  Y' \ni Q'_4 \\
@V{\frac{4}{2}}V{wt=w_1  }V @V{\frac{3}{2}}V{wt=w'_1}V\\
X @>=>> X
\end{CD}$$

Where $$ \begin{array}{l} w_1=v_1=(4l,4l-1,2,1,8l-1),\\
  w_2=\frac{1}{2(8l-1)}(10l-1,6l-1,1,4l,12l-2),\\
  w'_1=v_2=\frac{1}{2}(6l+1,6l-1,3,2,12l-2), \\
   w'_2=\frac{1}{2}(2l-1,2l-1,1,2,4l). \end{array}$$

\subsection{discrepancy=$a/2$ over a cD/2 point}
Let $Y \to X$ be a divisorial contraction to a cD/2 point $P \in X$
with discrepancy $\frac{a}{2}$. This was classified by Kawakiata
into two cases (cf. \cite[Theorem 1.2.ii]{Kk05}).

\noindent{\bf Case 1.} In the case (a), the local equation is given
by
$$\varphi:x_1^2+x_2^2 x_4+x_1x_3q(x_3^2,x_4)+\lambda x_2x_3^{2\alpha-1}+p(x_3^2,x_4)=0 \subset \bC^4/v$$ with $v=\frac{1}{2}(1,1,1,0)$  and $f$ is the weighted blowup with
weights $v_1=\frac{1}{2}(r+2,r,a,2)$, where $r+1=2ad$ and both $a,r$
are odd. Notice that $wt_{v_1}(\varphi)=r+1$ and as observed in
\cite{CH}, we have that $x_3^{4d} \in p(x_3^2,x_4)$.

There are two quotient singularities $Q_1,Q_2$ of index $r+2,r$
respectively.

\noindent{\bf Subcase 1.} We first take $g: Z \to Y$  the  Kawamata
blowup at $Q_1$, which is of type $\frac{1}{r+2}(4d,1,r+2-4d)$. We
set $w_2=\frac{1}{r+2}(4d,4d,1,r+2-4d)$ so that the weighted blowup
$\cX_2 \to \cX_1$ with weights $w_2$ is compatible with $g$.

 One has
$$E^3=\frac{4(r+1)}{ar(r+2)}, \quad F^3= \frac{(r+2)^2}{4d(r+2-4d)}.$$

In this case, the naive choose of $D_{0,X}=(x_3=0) \in |-K_X|$ is
reducible. We therefore pick $D_{0,X}=(x_4=0)$ instead. It is
elementary to check that ${\varphi_{2l+2}}_{|x_4=0}=x_3^{4d}$. Hence
$E \cap D_0$ is  irreducible. We have $c_0=2, q_0=r+2-4d$, hence
$c_0-4 q_0 <0$ and

$$T(f,g,D_0)=\frac{1}{r+2}(-\frac{2(r+1)}{r}+1) <0.$$
Therefore $-K_{Z/X}$ is nef and Hypothesis $\flat$ holds.

We summarize this case into following diagram.
$$\begin{CD}
Z @>{\dashrightarrow}>> Z' \\
@V{\frac{1}{r+2}}V{wt=w_2}V @V{\frac{a-2}{2}}V{wt=w'_2}V \\
Q_1 \in Y @.  Y' \ni Q'_4 \\
@V{\frac{a}{2}}V{wt=w_1  }V @V{\frac{2}{2}}V{wt=w'_1}V\\
X @>=>> X
\end{CD}$$

Where $$ \begin{array}{ll} w_1=v_1=\frac{1}{2}(r+2,r,a,2), & w'_1=v_2=(2d,2d,1,1) \\
  w_2=\frac{1}{r+2}(4d,4d,1,r+2-4d),  & w'_2=\frac{1}{2}(r+2-4d,r-4d,a-2,2). \end{array}$$

Notice also that $f'$ is a divisorial contraction of the same type
over a cD/2 point with smaller discrepancy $\frac{a-2}{2}$, where
$r+1-4d=2d(a-2)$. The map $g'$ is a contraction with discrepancy $1$
which is in Case 1 of Subsection 3.4.

\noindent{\bf Subcase 2.} If we take $g: Z \to Y$ to be the awamata
blowup at $Q_2$, which is a quotient singularity of type
$\frac{1}{r}(4d,r-4d,1)$. We set $w_2=\frac{1}{r}(4d,r-4d,1,4d)$ so
that the weighted blowup $\cX_2 \to \cX_1$ with weights $w_2$ is
compatible with $g$.

One has
$$E^3=\frac{4(r+1)}{ar(r+2)}, \quad F^3= \frac{r^2}{4d(r-4d)}.$$

We pick $D_{0,X}=(x_4=0)$ as in Subcase 1, then we have
$c_0=2,q_0=4d$ and
$$T(f,g)=\frac{1}{r}(-\frac{2(r+1)}{r+2}+1) <0.$$
Therefore  $-K_{Z/X}$ is  nef and hence Hypotheis $\flat$ hold.


We summarize this case into following diagram.
$$\begin{CD}
Z @>{\dashrightarrow}>> Z' \\
@V{\frac{1}{r}}V{wt=w_2}V @V{\frac{2}{2}}V{wt=w'_2}V \\
Q_2 \in Y @.  Y' \ni Q'_4 \\
@V{\frac{a}{2}}V{wt=w_1  }V @V{\frac{a-2}{2}}V{wt=w'_1}V\\
X @>=>> X
\end{CD}$$

Where $$ \begin{array}{ll} w_1=v_1=\frac{1}{2}(r+2,r,a,2), & w'_1=v_2=\frac{1}{2}(r+2-4d,r-4d,a-2,2) \\
  w_2=\frac{1}{r}(4d,r-4d,1,4d),  & w'_2=(2d,2d,1,1). \end{array}$$

Notice also that $g'$ is a divisorial contraction of the same type
over a cD/2 point with smaller discrepancy $\frac{a-2}{2}$, where
$r+1-4d=2d(a-2)$. The map $f'$ is a contraction with discrepancy $1$
which is in Case 1 of Subsection 3.4.

\noindent {\bf Case 2.} In the case (b), the local equation is given
by
$$\left\{ \begin{array}{l}
\varphi_1=x_4^2+x_2x_5+p(x_1,x_3)=0 \\
\varphi_2=x_2 x_3 +x_1^{2d+1} + q(x_1,x_3) x_1x_3 + x_5 =0.
\end{array} \right\} \subset \bC^5/v
,$$ with $v=\frac{1}{2}(1,1,0,1,1)$ and $f$  is a weighted blowup
with weights $v_1=\frac{1}{2}(a,r,2,r+2,r+4)$ with $r+2=(2d+1)a$.
Notice that $a$ is allowed to be even in this case.

There are quotient singularities $Q_2,Q_5$ of index $r,r+4$
respectively.

\noindent{\bf Subcase 1.} We first consider the extraction $Z \to Y$
over $Q_5$, which is a quotient singularity of type
$\frac{1}{r+4}(1,r-2d+3,2d+1)$. We set
$w_2:=\frac{1}{r+4}(1,4d+2,r-2d+3,2d+1,2d+1)$, then its give rise to
a weighted blowup compatible with Kawamata blowup $g: Z \to Y$.

We check that
$$E^3 = \frac{4(r+2)}{ar(r+4)}, \quad F^3=\frac{(r+4)^2}{(2d+1)(r-2d+3)}.$$
We pick $D_{0,X}=(x_3=0)$ in this case. Then it is elementary to
check that $D_0 \cap E$ is irreducible. We have $c_0=2, q_0=r-2d+3,
q_i:=q_5=2d+1$ and
$$T(f,g,D_0)=\frac{1}{r+4}(-\frac{2(r+2)}{r}+1)<0.$$

We summarize this case into following diagram.
$$\begin{CD}
Z @>{\dashrightarrow}>> Z' \\
@V{\frac{1}{r+4}}V{wt=w_2}V @V{\frac{a-1}{2}}V{wt=w'_2}V \\
Q_5 \in Y @.  Y' \ni Q'_3 \\
@V{\frac{a}{2}}V{wt=w_1  }V @V{\frac{1}{2}}V{wt=w'_1}V\\
X @>=>> X
\end{CD}$$

Where $$ \begin{array}{l} w_1=v_1=\frac{1}{2}(a,r,2,r+2,r+4),\\
w_2=\frac{1}{r+4}(1,4d+2,r-2d+3,2d+1,2d+1), \\
w'_1=v_2=\frac{1}{2}(1,2d+1,2,2d+1,2d+1), \\
 w'_2=\frac{1}{2}(a-1,r-2d-1,2,r-2d+1,r-2d+3). \end{array}$$

Notice also that $g'$ is a divisorial contraction of the same type
over a cD/2 point with smaller discrepancy $\frac{a-1}{2}$, where
$r-2d+1=(2d+1)(a-1)$. The map $f'$ is a contraction with discrepancy
$\frac{1}{2}$ which is a compatible weighted blowup of
\cite[Proposition 5.8]{HaII} by eliminating $x_5$.




\noindent{\bf Subcase 2.} One can also consider $Z \to Y$ be the
Kawamata blowup over $Q_2$, which is a quotient singularity of type
$\frac{1}{r}(1,r-2d-1,2d+1)$. We set
$w_2:=\frac{1}{r}(1,r-2d-1,2d+1,2d+1,4d+2)$ so that the weighted
blowup is compatible with the Kawamata blowup $g$.

We check that
$$E^3 = \frac{4(r+2)}{ar(r+4)}, \quad F^3=\frac{r^2}{(2d+1)(r-2d-1)}.$$
We still pick $D_{0,X}=(x_3=0)$ in this case which  is known to be
irreducible. We have $c_0=2, q_0=2d+1, q_i:=r-2d-1$ and hence
$$T(f,g)=\frac{1}{r}(-\frac{2(r+2)}{r+4}+1)<0.$$
Therefore, Hypothsis $\flat$ holds.

We summarize this case into following diagram.
$$\begin{CD}
Z @>{\dashrightarrow}>> Z' \\
@V{\frac{1}{r}}V{wt=w_2}V @V{\frac{1}{2}}V{wt=w'_2}V \\
Q_2 \in Y @.  Y' \ni Q'_3 \\
@V{\frac{a}{2}}V{wt=w_1  }V @V{\frac{a-1}{2}}V{wt=w'_1}V\\
X @>=>> X
\end{CD}$$

Where $$ \begin{array}{l} w_1=v_1=\frac{1}{2}(a,r,2,r+2,r+4),\\
w_2=\frac{1}{r}(1,r-2d-1,2d+1,2d+1,4d+2), \\
w'_1=v_2=\frac{1}{2}(a-1,r-2d-1,2,r-2d+1,r-2d+3), \\
 w'_2=\frac{1}{2}(1,2d+1,2,2d+1,2d+1). \end{array}$$


\subsection{discrepancy 2/2 to a cE/2 point}
In this case, by \cite[Theorem 1.2]{Ha1}, the local equation is
$$\varphi:
x_4^2+x_1^3+x_2^4+x_3^8+... =0 \subset \bC^4/v,$$ with
$v=\frac{1}{2}(0,1,1,1)$.

By Hayakawa's result \cite{Ha1}, we know that $Y \to X$ is given by
weighted blowup with vector $v_1=(3,2,1,4)$. There is a quotient
singularity $Q_1$ of index $6$.

\noindent{\bf Remark.} There is another quotient singularity $R_3$
of index $2$ in the fixed locus of $\bZ_2$ action on $U_3$, which is
not $Q_3$.

We can take $w_2=\frac{1}{6}(2,5,1,1)$, then
$v_2=\frac{1}{2}(2,3,1,3)$. We pick $D_{0,X}=(x_3=0) \in |-K_X|$ and
it is easy to see that $D_0 \cap E$ is $\bZ_2$ quotient of
$(x_4^2+x_2^4=0) \subset \bP(3,2,1,4)$, which is irreducible.  We
also checked that
$$E^3=\frac{1}{6},\quad F^3=\frac{36}{5},\quad T(f,g)=\frac{-1}{10}<0.$$

We summarize this case into following diagram.
$$\begin{CD}
Z @>{\dashrightarrow}>> Z' \\
@V{\frac{1}{6}}V{wt=w_2}V @V{\frac{1}{3}}V{wt=w'_2}V \\
Q_1 \in Y @.  Y' \ni Q'_2 \\
@V{\frac{2}{2}}V{wt=w_1  }V @V{\frac{1}{2}}V{wt=w'_1}V\\
X @>=>> X
\end{CD}$$

Where $$ \begin{array}{ll} w_1=v_1=(3,2,1,4), & w'_1=v_2=\frac{1}{2}(2,3,1,3) \\
  w_2=\frac{1}{6}(2,5,1,1),  & w'_2=\frac{1}{3}(5,4,1,6). \end{array}$$

 Notice that $f':Y' \to X$ is the weighted blowup
with vector $v_2$ with discrepancy  $\frac{1}{2}$ as in
\cite[Theorem 10.41]{Ha1}. The point $Q'_2 \in Y'$ is a cD/3 point
with local equation
$$\overline{x_4}^2+\overline{x_1}^3+\overline{x_2}^3+\overline{x_3}^8\overline{x_2}+... =0 \subset
\bC^4/v,$$ with $v=\frac{1}{3}(2,1,1,0)$.  Hence $Z' \to Y'$ is the
weighted blowup with weights $w'_2$ with discrepancy $\frac{1}{3}$
as in \cite[Theorem 9.25]{Ha1}.

\subsection{discrepancy 2/2 to a cD/2 point}
There are three cases to consider according Hayakawa's
classification \cite[Theorem 1.1]{Ha1}. Note that the case of
Theorem 1.1.(iii) was treated in Subsection 3.2 already.

\noindent{\bf Case 1.} The case of Theorem 1.1.(i) in \cite{Ha1}.\\
In this case, the local equation is $$ x_1^2+x_2^2 x_4
+s(x_3,x_4)x_2x_3x_4+r(x_3)x_2+p(x_3,x_4)=0 \subset \bC^4/v,$$ with
$v=\frac{1}{2}(1,1,1,0)$. The map $f: Y \to X$ is given by weighted
blowup with vector $v_1=(2l,2l,1,1)$. Moreover,
$wt_{v_1}(\varphi)=2l$ and  $x_3^{4l} \in p(x_3,x_4)$.

There is a  singularity $Q_2$ of type $cA/4l$ with $aw=2$. The local
equation  in $U_2$ is given by $$ \overline{x_1}^2+\overline{x_2}
\overline{x_4} +\overline{x_3}^{4l}+... =0 \subset
\bC^4/\frac{1}{4l}(0,2l-1,1,2l+1).
$$
 Since $\overline{x_3}^{4l}$ appears in the
equation, in terms of the terminology as in \cite[\S 6]{HaI}, one
has $\tau-wt(\overline{x_3}^{4l})=1$. This implies that there is
only one weighted blowup $Z \to Y$ with minimal discrepancy
$\frac{1}{4l}$ which is given by
 the weight $w_2=\frac{1}{4l}(4l,2l-1,1,2l+1)$.

We pick $D_{0,X}=(x_3=0) \in |-K_X|$ and it is easy to see that $D_0
\cap E$ is $\bZ_2$ quotient of $(x_1^2+a_{0,4l}x_4^{4l}=0) \subset
\bP(3,2,1,4)$, where $a_{0,4l}$ denotes the coefficient. In any
event, this is irreducible.

We  checked that
$$E^3=\frac{2}{4l}, \quad F^3=\frac{(4l)^2}{(2l+1)(2l-1)}, \quad T(f,g)=\frac{1}{4l}(-2+\frac{1}{2l+1})<0.$$
Hence Hypothesis $\flat$ holds.

Hence we can summarize this case into following diagram.
$$\begin{CD}
Z @>{\dashrightarrow}>> Z' \\
@V{\frac{1}{4l}}V{wt=w_2}V @V{\frac{1}{2}}V{wt=w'_2}V \\
Q_2 \in Y @.  Y' \ni Q'_4 \\
@V{\frac{2}{2}}V{wt=w_1  }V @V{\frac{1}{2}}V{wt=w'_1}V\\
X @>=>> X
\end{CD}$$

Where $$ \begin{array}{ll} w_1=v_1=(2l,2l,1,1), & w'_1=v_2=\frac{1}{2}(2l+1,2l-1,1,2) \\
  w_2=\frac{1}{4l}(4l,2l-1,1,2l+1),  & w'_2=\frac{1}{2}(2l-1,2l+1,1,2). \end{array}$$

In this case, both $f'$ and $g'$ are divisorial contractions to a
cD/2 point as in \cite[Proposition 5.8]{HaII}.

\noindent{\bf Case 2.} The case of Theorem 1.1.(i') in \cite{Ha1}.\\
In this case, the local equation is $$\varphi: x_1^2+x_2 x_3
x_4+x_2^{4} +x_3^{2b}+x_4^c =0 \subset \bC^4/v,$$ with $b \ge 2, c
\ge 4$ and $v=\frac{1}{2}(1,1,1,0)$. The map $f: Y \to X$ is given
by weighted blowup with vector $v_1=(2,2,1,1)$. Moreover,
$wt_{v_1}(\varphi)=4$.

There is a  singularity $Q_2$ of type $cA/4$ with local equation  in
$U_2$ is given by $$ \overline{x_1}^2+\overline{x_3} \overline{x_4}
+\overline{x_2}^{4}+\overline{x_3}^{2b}\overline{x_2}^{2b-4}+\overline{x_4}^c
\overline{x_2}^{c-4} =0 \subset \bC^4/\frac{1}{4}(0,1,1,3).
$$

Since $\overline{x_2}^{4}$ appears in the equation,  one has
$\tau-wt=1$. This implies that there is only one weighted blowup $Z
\to Y$ with minimal discrepancy $\frac{1}{4}$ which is given by
 the weight $w_2=\frac{1}{4}(4,1,1,3)$.

 We pick $D_{0,X}=(x_3=0) \in |-K_X|$ again and it is easy to see that $D_0
\cap E$ is $\bZ_2$ quotient of $(x_1^2+\delta_{4,c}x_4^{c}=0)
\subset \bP(3,2,1,4)$, where $\delta_{4,c}$ is the Kronecker's delta
symbol. In any event, this is irreducible.

Then the invariant and diagram is exactly the same as the $l=1$ in
Case 1.  For reference, we have
$$E^3=\frac{2}{4}, \quad F^3=\frac{4^2}{3l}, \quad T(f,g)=\frac{1}{4}(-2+\frac{1}{3})<0.$$

 We summarize
the result into following diagram.
$$\begin{CD}
Z @>{\dashrightarrow}>> Z' \\
@V{\frac{1}{4}}V{wt=w_2}V @V{\frac{1}{2}}V{wt=w'_2}V \\
Q_2 \in Y @.  Y' \ni Q'_4 \\
@V{\frac{2}{2}}V{wt=w_1  }V @V{\frac{1}{2}}V{wt=w'_1}V\\
X @>=>> X
\end{CD}$$

Where $$ \begin{array}{ll} w_1=(2,2,1,1), & w'_1=\frac{1}{2}(3,1,1,2) \\
  w_2=\frac{1}{4}(4,1,1,3),  & w'_2=\frac{1}{2}(1,3,1,2). \end{array}$$

Note that $f', g'$ are the weighted blowup of type $v_1$ as in
\cite[\S 4]{HaII}.

\noindent{\bf Case 3.} The case of Theorem 1.1.(ii) in \cite{Ha1}.\\
The equation is given as
$$ \left\{ \begin{array}{l}
\varphi_1: x_1^2+x_4x_5+r(x_3)x_2+p(x_3,x_4)=0 \\
\varphi_2: x_2^2+s(x_3,x_4)x_1x_3+q(x_3,x_4)-x_5=0 \end{array}
\right. $$ with $v=\frac{1}{2}(1,1,1,0,0)$. The map $f:Y \to X$ is
given by weighted blowup with vector $v_1=(l+1,l,1,1,2l+1)$. We take
$Z \to Y$ to be the extraction over the quotient singularity $Q_5$,
which is a quotient singularity of type $\frac{1}{4l+2}(3l+2,l,1)$.

We can write $p(x_3,x_4)=p_0(x_3)+x_4p_1(x_3,x_4)$. By replacing
$x_5$ with $x_5-p_1(x_3,x_4)$, we may and so assume that $p=p(x_3)$.

We need to distinguish into two subcases according to the parity of
$l$.

\noindent{\bf Subcase 3.1} $l$ is odd. \\
In this situation, we need to use the fact that either $x_3^{2l+2}
\in \varphi_1$ or $x_2 x_3^{l+2} \in \varphi_1$ (cf. \cite[Theorem
1.1.ii.b,c]{Ha1}). By this fact, one sees that the compatible
weighted blowup is given by $w_2=\frac{2l}{4l+2}(3l+2,l,1,2l+2,2l)$.

We now pick $D_{0,X}=(x_3=0) \in |-K_X|$ again and it is easy to see
that $D_0 \cap E$ is $\bZ_2$ quotient of
$(x_1^2=x_2^2+a_{0,2l}x_4^{2l}=0) \subset \bP(l+1,l,1,1,2l+1)$,
where $a_{0,2l}$ is the coefficient. In any event, this is
irreducible.

We have
$$\begin{array}{l} E^3=\frac{4}{4l+2}, \quad F^3=\frac{(4l+2)^2}{l(3l+2)},\\
T(f,g)=\frac{1}{4l+2}(-4+\frac{2l}{l(3l+2)})<0. \end{array}$$

 We summarize the
result into following diagram.
$$\begin{CD}
Z @>{\dashrightarrow}>> Z' \\
@V{\frac{1}{4}}V{wt=w_2}V @V{\frac{1}{2}}V{wt=w'_2}V \\
Q_5 \in Y @.  Y' \ni Q'_4 \\
@V{\frac{2}{2}}V{wt=w_1  }V @V{\frac{1}{2}}V{wt=w'_1}V\\
X @>=>> X
\end{CD}$$

Where $$ \begin{array}{ll} w_1=(l+1,l,1,1,2l+1), & w'_1=\frac{1}{2}(l+2,l,1,2,2l) \\
  w_2=\frac{1}{4l+2}(3l+2,l,1,2l+2,2l),  & w'_2=\frac{1}{2}(l,l,1,2,2l-1). \end{array}$$

\noindent{\bf Subcase 3.2} $l$ is even. \\
In this situation, we need to use the fact that either $x_3^{2l} \in
\varphi_2$ or $x_1 x_3^{l-1} \in \varphi_2$ (cf. \cite[Theorem
1.1.ii.a]{Ha1}). Then the compatible weighted blowup is given by
$w_2=\frac{1}{4l+2}(l+1,3l+1,1,2l+2,2l)$.

We pick $D_{0,X}=(x_3=0) \in |-K_X|$ again such that $D_0 \cap E$ is
irreducible similarly. We have
$$\begin{array}{l} E^3=\frac{4}{4l+2}, \quad
F^3=\frac{(4l+2)^2}{(l+1)(3l+1)},\\
T(f,g)=\frac{1}{4l+2}(-4+\frac{2l}{(l+1)(3l+1)})<0. \end{array}$$

 We summarize the
result into following diagram.
$$\begin{CD}
Z @>{\dashrightarrow}>> Z' \\
@V{\frac{1}{4l+2}}V{wt=w_2}V @V{\frac{1}{2}}V{wt=w'_2}V \\
Q_5 \in Y @.  Y' \ni Q'_4 \\
@V{\frac{2}{2}}V{wt=w_1  }V @V{\frac{1}{2}}V{wt=w'_1}V\\
X @>=>> X
\end{CD}$$

Where $$ \begin{array}{ll} w_1=(l+1,l,1,1,2l+1), & w'_1=\frac{1}{2}(l+1,l+1,1,2,2l) \\
  w_2=\frac{1}{4l+2}(l+1,3l+1,1,2l+2,2l),  & w'_2=\frac{1}{2}(l+1,l-1,1,2,2l+2). \end{array}$$

\subsection{discrepancy  a/n to a cA/n point}
This case is described in \cite[Theorem 1.1.i]{Kk05}, the local
equation is given by
$$ \varphi: x_1x_2 +g(x_3^r,x_4)=0 \subset \bC^4/v,$$
where $v=\frac{1}{n}(1,-1,b,0)$.

The map $f$ is given by weighted blowup with weight
$v_1=\frac{1}{n}(r_1,r_2,a,r)$. We may write $r_1+r_2=dan$ for some
$d>0$ with the term $x_3^{dn} \in \varphi$. We also have that
$s_1:=\frac{a-br_1}{n}$ is relatively prime to  $r_1$ and
$s_2:=\frac{a+br_2}{n}$ is relatively prime to $r_2$ (cf.
\cite[Lemma6.6]{Kk05}). We thus have the following:

$$\left\{ \begin{array}{l}
a=br_1+ns_1,\\
1=q_1r_1+s_1^* s_1,\\
a=-br_2+ns_2,\\
1=q_2r_2+s_2^* s_2,\\
\end{array} \right.
$$
for some $0 \le s_i^* < r_i$ and some $q_i$.

We set
$$ \delta_1:=-nq_1+bs_1^*, \quad \delta_2:=-n q_2-bs_2^*.$$
One sees easily that
$$\left\{
\begin{array}{l}
\delta_1 r_1+n=as_1^*,\\
\delta_2 r_2+n=as_2^*.
\end{array}
\right.
$$

\noindent{\bf Claim 1.} $a> \delta_i \ne 0$ for $i=1,2$.\\
To see this, first notice that if $\delta_1=0$, then $s_1^*=tn,
q_1=tb$ for some integer $t$. It follows that $1=ta$, which
contradicts to $a>1$. Hence $\delta_1 \ne 0$ and similarly $\delta_2
\ne 0$.

Note that $\delta_ir_i=as_i^*-n < as_i^* <ar_i$. Hence we have
$\delta_i <a$ for $i=1,2$. This completes the proof of the Claim 1.


Moreover, we need the following: \\
\noindent {\bf Claim 2.} $\delta_i >0$ for some $i$.\\
If  $\delta_i <0$, then $n=-\delta_i r_i +as_i^* \ge r_i$. In fact,
the equality holds only when $s_i^*=0$, which implies in particular
that $r_i=1$. We can not have the equalities simultaneously for
$i=1,2$ otherwise, $r_1=r_2=1$ yields $2=r_1+r_2=adn \ge 2n \ge 4$.
Therefore
$$ 2n > r_1+r_2 =adn \ge 2n, $$
which is absurd. This completes the proof of the Claim.

\begin{rem}
Suppose that both $\delta_1, \delta_2>0$ and $(a,r_1)=1$, then we
have $\delta_1+\delta_2=a$. To see this, note that
$as_2^*=n+\delta_2 r_2=n+ \delta_2 (adn-r_1)$. Therefore,
$$ a( s_2^* - \delta_2 dn)= n +(-\delta_2) r_1.$$
By $(a,r_1)=1$ and comparing it with $a s_1^*=n+(\delta_1) r_1$, we
have $\delta_1=-\delta_2+ ta$ for some $t \in \bZ$. Since $ 0<
\delta_1+\delta_2 <2a$, it follows that $\delta_1+\delta_2=a$.
\end{rem}

\noindent{\bf Subcase 1.} Suppose that $\delta_1>0$.\\
Notice that $r_1=1$ implies that $s_1^*=1, q_1=1$ and hence
$\delta_1=-n$. Therefore, we must have $r_1>1$.
 Let $g: Z \to
Y$ be Kawamata blowup over $Q_1$, which is a quotient singularity of
type $\frac{1}{r_1}(r_1-s_1^*,1,s_1^*)$. We take
$w_2=\frac{1}{r_1}(r_1-s_1^*,dr,1,s_1^*)$ which is a compatible
weighted blowup.

We pick $D_{0,X}=(x_4=0)$ then $E \cap D_0$ is defined by
$x_1x_2+x_3^{dn}=0$ which is clearly irreducible.
 We have $c_0=n, q_0=s^*_1$ and hence
$c_0-a q_0=-\delta_1 r_1 <0$. Also
 $$\begin{array}{l} E^3=\frac{dr^2}{r_1r_2}, \quad
F^3=\frac{(r_1)^2}{s^*_1(r_1-s^*_1)},\\
T(f,g, D_0)=\frac{1}{r_1}(-\frac{adn}{r_2}+1)<0.
\end{array}$$
Hence Hypothesis $\flat$ holds

 We summarize the
result into following diagram.
$$\begin{CD}
Z @>{\dashrightarrow}>> Z' \\
@V{\frac{1}{r_1}}V{wt=w_2}V @V{\frac{\delta_1}{n}}V{wt=w'_2}V \\
Q_1 \in Y @.  Y' \ni Q'_4 \\
@V{\frac{a}{n}}V{wt=w_1  }V @V{\frac{a-\delta_1}{n}}V{wt=w'_1}V\\
X @>=>> X
\end{CD}$$

Where
$$ \begin{array}{ll} w_1=\frac{1}{n}(r_1,r_2,a,n), & w'_1=\frac{1}{n}(r_1-s_1^*,r_2-\delta_1dn+s_1^*,a-\delta_1,n) \\
  w_2=\frac{1}{r_1}(r_1-s_1^*,dn,1,s_1^*),  & w'_2=\frac{1}{n}(s_1^*, \delta_1 dn-s_1^*,\delta_1,n). \end{array}$$

Note that $0< a':=a-\delta_1 <a$ and both $f',g'$ are extremal
contractions with discrepancies $< \frac{a}{r}$.

\noindent{\bf Subcase 2.} Suppose that $\delta_2>0$.\\
Again, $r_2>1$ under this assumption. Let $g: Z \to Y$ be Kawamata
blowup over $Q_2$, which is a quotient singularity of type
$\frac{1}{r_2}(r_2-s_2^*,1,s_2^*)$. We take $w_2=\frac{1}{r_2}(dr,
r_2-s_2^*,1,s_2^*)$ which is a compatible weighted blowup.

We pick $D_{0,X}=(x_4=0)$ again  which is  irreducible.
 We have $c_0=n, q_0=s^*_2$ and hence
$c_0-a q_0=-\delta_2 r_2 <0$. Also
 $$\begin{array}{l} E^3=\frac{dr^2}{r_1r_2}, \quad
F^3=\frac{(r_2)^2}{s^*_2(r_2-s^*_2)},\\
T(f,g, D_0)=\frac{1}{r_2}(-\frac{adn}{r_1}+1)<0.
\end{array}$$
Hence Hypothesis $\flat$ holds

 We summarize the
result into following diagram.
$$\begin{CD}
Z @>{\dashrightarrow}>> Z' \\
@V{\frac{1}{r_1}}V{wt=w_2}V @V{\frac{\delta_2}{n}}V{wt=w'_2}V \\
Q_2 \in Y @.  Y' \ni Q'_4 \\
@V{\frac{a}{n}}V{wt=w_1  }V @V{\frac{a-\delta_2}{n}}V{wt=w'_1}V\\
X @>=>> X
\end{CD}$$

Where
$$ \begin{array}{ll} w_1=\frac{1}{n}(r_1,r_2,a,n), & w'_1=\frac{1}{n}(r_1+s_2^*-\delta_2dn,r_2-s_2^*,a-\delta_2,n) \\
  w_2=\frac{1}{r_2}(dn, r_2-s_2^*,1,s_2^*),  & w'_2=\frac{1}{n}(\delta_2 dn-s_2^*,s_2^*, \delta_2,n). \end{array}$$

It is easy to see that if $r_1 \ge r_2$, then $\delta_1>0$. Hence
extracting over $Q_1$ provides the desired factorization. Similar
argument holds if  $r_2 \ge r_1$. Therefore, one can  conclude that
Theorems  holds by extracting over the point of highest index.

\section{further remarks}
It is easy to see that our method also work for any divisorial
contraction to a point of index 1 which is a weighted blowup. Let us
take $f: Y \to X$ the weighted blowup with weight $(1,a,b)$ for
example, where $a<b$ are relatively prime. Write $ap=bq+1$. Then by
our method, one sees easily that $g : Z \to Y$ is weighted blowup
with weight $\frac{1}{b}(p,1,b-p)$ over $Q_3$. After 2-ray game, we
have that $g'$ is the weighted blowup with weight $(1,q,p)$ over
$Q'_1$ and $f'$ is the weighted blowup with weight $(1,a-q, b-p)$.
Also $Z \dashrightarrow Z'$ is a toric flip. All the other known
examples fit into our framework nicely as well.


We would like to raise the following

\begin{Prbm}
Can every $3$-fold divisorial contraction to a point  be realized as
a weighted blowup?
\end{Prbm}

Assuming the affirmative answer, then by the method we provided in
this article, it is reasonable to expect, as in Corollary 1.3, that
for any $3$-fold divisorial contraction $Y \to X$ to a singular
point $P \in X$ of index $r=1$ with discrepancy $a>1$,
there exists a sequence of birational maps $$Y=:X_n \dashrightarrow \ldots \dashrightarrow X_0=:X$$ such that each map $X_{i+1} \dashrightarrow X_{i}$ is one of the following:\\
\begin{enumerate}
\item a divisorial extraction over a singular point of index $r_i \ge 1$ with discrepancy $\frac{1}{r_i}$.

\item a divisorial contraction to a singular point of index $r_i \ge 1$ with discrepancy $\frac{1}{r_i}$.

\item a flip or flop.

\end{enumerate}

Together with the factorization result of \cite{CH}, we have the
following:
\begin{conj}
Let $Y \dashrightarrow X$ be a birational map which is flip, a
divisorial contraction to a point, or a divisorial contraction to a
curve.
 There exists a sequence of birational maps $$Y=:X_n \dashrightarrow \ldots \dashrightarrow X_0=:X$$ such that each map $X_{i+1} \dashrightarrow X_{i}$ is one of the following:\\

\begin{enumerate}
\item a divisorial extraction or contraction over a point with minimal discrepancy,

\item a blowup of  a lci curve.

\item a flop.

\end{enumerate}

\end{conj}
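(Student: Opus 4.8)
The plan is to derive the Conjecture by bootstrapping from three inputs: the factorization of flips and divisorial contractions to curves of \cite{CH}, Corollary 1.3 above, and the index-one analogue of Corollary 1.3 stated just before the Conjecture, the last of which is conditional on an affirmative answer to the Problem. The argument will be an induction on a well-founded complexity invariant attached to flips and to divisorial contractions, arranged so that every reduction step replaces a map by pieces of the three allowed types together with \emph{strictly simpler} flips.

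First I would apply \cite{CH}: a flip or a divisorial contraction to a curve gets written as a composite whose steps are flops, blowups of lci curves, divisorial extractions and contractions over points of minimal discrepancy, divisorial contractions to a point of possibly higher index and possibly non-minimal discrepancy, and flips of strictly smaller complexity. Flops are of type (3), lci-curve blowups of type (2), and minimal-discrepancy divisorial extractions/contractions of type (1); so what remains to be removed is (a) divisorial contractions to a point of index $n>1$ with non-minimal discrepancy, (b) divisorial contractions to a Gorenstein point with discrepancy $>1$, and (c) the smaller flips.

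Next I would dissolve the divisorial contractions to points. Case (a) is precisely Corollary 1.3: such a contraction is rewritten as a chain whose steps are minimal-discrepancy divisorial extractions over points of index $>1$, minimal-discrepancy divisorial contractions to such points, flips, and flops. Case (b) is handled identically by the index-one analogue of Corollary 1.3; this is the one conditional input, and where unconditionality is lost. It covers, for example, the weighted blowup of a smooth point with weights $(1,a,b)$ treated in Section 4, which is peeled off into Kawamata blowups, toric flips, and eventually the ordinary blowup of a smooth point (of minimal discrepancy, hence type (1)). After these substitutions the only disallowed pieces remaining are flips.

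Finally I would close the recursion. Every flip now appearing is strictly simpler than the original map; feeding it back through \cite{CH} produces again only flops, lci-curve blowups, minimal-discrepancy divisorial operations, non-minimal divisorial contractions to points, and even simpler flips, and the last two are again dissolved as above. Since the complexity cannot decrease forever, and since the flips of minimal complexity are already flops --- there being no Gorenstein terminal flipping contraction --- the process terminates with a factorization into steps of types (1), (2), (3) only. The main obstacle is exactly this bookkeeping: one must exhibit a single complexity invariant --- plausibly a lexicographic combination of the number of non-Gorenstein points weighted by index with the discrepancies of the divisorial pieces --- that is simultaneously strictly decreased by the \cite{CH} factorization, by Corollary 1.3, and by its index-one analogue, so that the induction is genuinely well-founded; establishing such a simultaneous decrease is the crux.
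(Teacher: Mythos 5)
There is no proof of this statement in the paper to compare against: it is stated as a \emph{Conjecture}, offered after the open Problem precisely because the inputs your argument needs are not available. Your proposal makes this visible at two points, and both are genuine gaps rather than routine bookkeeping. First, step (b) of your reduction invokes the ``index-one analogue of Corollary 1.3,'' which the paper only says is ``reasonable to expect'' \emph{assuming} an affirmative answer to the Problem (that every $3$-fold divisorial contraction to a point is a weighted blowup). Neither the Problem nor the analogue is proved, so your argument is conditional twice over; it cannot establish the Conjecture as stated, only a conditional version of it.

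Second, and independently, the termination of your recursion is not established. The allowed output types (1)--(3) do not include flips, yet every input you use --- the factorization of \cite{CH}, Corollary 1.3, and the putative index-one analogue --- produces new flips on new models, which must be fed back through the machine. You propose ``a lexicographic combination of the number of non-Gorenstein points weighted by index with the discrepancies of the divisorial pieces'' and then concede that proving it strictly decreases under all three substitutions ``is the crux.'' That concession is the missing proof: nothing in the paper (nor in the sketch of Section 4) supplies such an invariant, and it is not clear that the flips arising from Corollary 1.3 or from \cite{CH} are simpler in any uniform sense --- in \cite{CH} the induction is run on a depth-type invariant of the singularities, and whether it decreases compatibly across all the factorizations you chain together is exactly the open issue. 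Your final appeal to the nonexistence of Gorenstein terminal flipping contractions only handles the (hypothetical) base case; it does not make the induction well-founded. So the proposal is an elaboration of the paper's heuristic for why the Conjecture is plausible, not a proof of it.
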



\end{document}